\UseRawInputEncoding
\documentclass[a4paper]{article}
\pdfoutput=1
\usepackage[all]{xy}
\usepackage[latin1]{inputenc}        
\usepackage[dvips]{graphics,graphicx}
\usepackage{amsfonts,amssymb,amsmath,color,mathrsfs, amstext}
\usepackage{amsbsy, amsopn, amscd, amsxtra, amsthm,authblk, enumerate}
\usepackage{enumerate,algorithmic,algorithm}
\usepackage{upref}
\usepackage[colorlinks,
            linkcolor=red,
            anchorcolor=red,
            citecolor=red
            ]{hyperref}

\usepackage{geometry}
\geometry{left=3.5cm,right=3.5cm,top=3cm,bottom=3cm}
\usepackage[displaymath]{lineno}
\usepackage{float}
\usepackage{yhmath}
\usepackage[normalem]{ulem}
\usepackage{subfig}

\numberwithin{equation}{section}     

\def\R{\mathbb{R}}
\def\b{\boldsymbol}

\def\T{\mathbb{T}}
\def\cL{\mathcal{L}}

\def\cA{\mathcal{A}}
\def\cR{\mathcal{R}}

\newcommand{\E}{\mathbb{E}}

\newtheorem{theorem}{Theorem}[section]
\newtheorem{lemma}{Lemma}[section]
\newtheorem{assumption}{Assumption}[section]
\newtheorem{proposition}{Proposition}[section]
\newtheorem{remark}{Remark}[section]
\newtheorem{corollary}{Corollary}[section]

\begin{document}

\title{Convergence of random splitting method for the Allen-Cahn equation in a background flow}
\author[a,b,c]{Lei Li\thanks{E-mail: leili2010@sjtu.edu.cn}}
\author[a]{Chen Wang\thanks{E-mail: wangchen6326@sjtu.edu.cn}}
\affil[a]{School of Mathematical Sciences, Shanghai Jiao Tong University, Shanghai, 200240, P.R.China.}
\affil[b]{Institute of Natural Sciences, MOE-LSC, Shanghai Jiao Tong University, Shanghai, 200240, P.R.China.}
\affil[c]{Shanghai Artificial Intelligence Laboratory}
\date{}
\maketitle

\begin{abstract}
We study in this paper the convergence of the random splitting method for Allen-Cahn equation in a background flow that plays as a simplified model for phase separation in multiphase flows. The model does not own the gradient flow structure as the usual Allen-Cahn equation does, and the random splitting method is advantageous due to its simplicity and better convergence rate. Though the random splitting is a classical method, the analysis of the convergence is not straightforward for this model due to the nonlinearity and unboundedness of the operators. We obtain uniform estimates of various Sobolev norms of the numerical solutions and the stability of the model. Based on the Sobolev estimates, the local trunction errors are then rigorously obtained. We then prove that the random operator splitting has an expected single run error with order $1.5$ and a bias with order $2$. Numerical experiments are then performed to confirm our theoretic findings.
\end{abstract}
\textbf{Keywords:} operator splitting; phase separation; multiphase flow; stability; truncation error; bias

\section{Introduction}\label{intro}

The Allen-Cahn model is among the most widely used phenomenological phase-field models for binary miscible fluids especially binary alloys \cite{allen1979microscopic}, and has been used in a range of problems such as crystal growth \cite{wheeler1992phase}, image analysis \cite{benevs2004geometrical}, mean curvature-flow \cite{feng2003numerical}, and random perturbations \cite{heida2018large}. 
Most phase-field models are derived as gradient flows associating with some specific energy functional and the classical Allen-Cahn model is the $L^2$ gradient flow of the Ginzburg-Landau energy, taking the standard form
\begin{equation}\label{equ:allen-cahn}
\partial_{t}u=\nu \Delta u-f(u),
\end{equation}
subject to initial data $u|_{t=0}=u_0$. Here, $(t,x)\in(0,\infty)\times\Omega$ for some domain $\Omega$, \begin{gather}\label{eq:nonlinearterm}
f(u)=u^3-u,
\end{gather}
 and $u$ corresponds to the concentration of a phase in the fluid \cite{allen1979microscopic}. The associated Ginzburg-Landau energy functional is given by
\begin{gather}
\mathcal{E}(u)=\int_{\Omega}\left[\frac{1}{2}\nu |\nabla u|^2+F(u)\right]\,dx,
\end{gather}
where $F'(u)=f(u)$ and $F(u)$ can be given by
\begin{gather}
F(u)=\frac{1}{4}(1-u^2)^2.
\end{gather}
Clearly, the equation can be formulated as
$\dot{u}=-\frac{\delta \mathcal{E}}{\delta u}$.
Other models include the Cahn-Hilliard equation \cite{cahn1958free} which is the $H^{-1}$ gradient flow of the Ginzburg-Landau energy and phase-field crystal models as the gradient flows of the Swift-Hohenberg energy  \cite{quan2022decreasing}.

When one considers general multiphase flows, the phase field models should be coupled to fluid equations like the Navier-Stokes equations \cite{jacqmin1999calculation,praetorius2015navier}. As a simplified model, a background velocity field (e.g., a shear flow) could be added into the phase-field model as a linear convection term (see for example \cite{berthier2001phase,bray2003coarsening,o2007bubbles}). We are then motivated to consider the following Allen-Cahn equation with a background flow
\begin{equation}\label{equ:acdrift}
\partial_tu+v(x)\cdot\nabla u=\nu \Delta u-f(u),
\end{equation}
subject to initial data $u(x, 0)=u_0(x)$. For simplicity, we choose the domain $\Omega$ to be a $d$-dimensional torus with length $L$. The background field $v(\cdot)$ is a given time-independent flow with 
\begin{gather}
\nabla\cdot v(x)=0,
\end{gather}
modeling the incompressibility of the fluid. The nonlinear reaction term $f$ is the same as in \eqref{eq:nonlinearterm}.

The original Allen-Cahn equation \eqref{equ:allen-cahn} model can be numerically solved efficiently and stably by taking advantage of the gradient flow structure \cite{bottcher2017energy,shen2018scalar,liao2020energy,fu2024energy}. 
 Another class of numerical methods is the operator splitting method, which has been proposed by several authors to solve the Allen-Cahn model  \eqref{equ:allen-cahn} \cite{descombes2001convergence,weng2016analysis,li2022stability,li2022stability2}. 
The operator splitting method is a successful strategy to break a difficult problem into several relatively simpler subproblems \cite{strang1968construction,iserles2009first,glowinski2017splitting}. Combining the Strang splitting and the Richardson's extrapolation, the order of convergence could be improved to be at least third order. 
 The rigorous proof of the convergence of the splitting method for the Allen-Cahn equation is challenging due to the nonlinearity and the unboundedness, and has been resolved in \cite{li2022stability,li2022stability2}. In particular, the second order convergence of the Strang splitting has been established in \cite{li2022stability}.

With three physical terms--advection, diffusion and reaction, the model  
\eqref{equ:acdrift} does not own the gradient flow structure as  \eqref{equ:allen-cahn} does. We thus turn our attention to the splitting methods. Consider the problem in a general form
\begin{equation}\label{equ:pro-org}
\partial_tu=\sum_{i=1}^p \mathcal{L}_i(u)
\end{equation}
with initial data $u_0$. Here, some operators $\cL_i$ could be nonlinear. Generally, the structure of each $\mathcal{L}_i$ is much simpler than the sum. For each subproblem with initial data $w_0$
\begin{equation}\label{equ:pro-litt}
\partial_tw=\mathcal{L}_i(w),
\end{equation}
we define the solution semigroup $\{S_{i}(t)\}_{t\ge 0}$ (which could be nonlinear) by
\begin{equation}\label{equ:split-term-short-write-early}
S_{i}(t)[w_0]:=w(t).
\end{equation}
In the case $p=2$, with a time step $\tau>0$, the Strang splitting method is given by
\begin{equation}\label{equ:strang-split}
u_n=S_{1}(\tau/2)S_{2}(\tau)S_{1}(\tau/2)[u_{n-1}],
\end{equation}
and it has convergence order $2$. The Strang splitting  \cite{strang1968construction} for $p=2$ is efficient because one can combine the half-steps between consecutive time steps using the semigroup property so that essentially only two operators should be evaluated.  When $p\ge 3$, doing a Trotter splitting symmetrically as $S_{1}(\tau/2)\cdots S_{p-1}(\tau/2)S_{p}(\tau)S_{p-1}(\tau/2)\cdots S_{1}(\tau/2)[u_{n-1}]$ could also achieve second order, but now one needs to evaluate essentially $2p-2$ operators per time step, which looks inefficient. It turns out that the randomized methods \cite{Novak1988,jin2020random,2011On,Cao2021} and the randomized operator splitting methods \cite{eisenmann2024randomized} could be beneficial due to the simplicity and the averaging effect in time. The random operator splitting method of Trotter type applied to the general problem  \eqref{equ:pro-org} is then given by 
\begin{equation}\label{eq:randomproduct}
u_{n}=S_{\xi_p}(\tau)\cdots S_{\xi_1}(\tau)[u_{n-1}]=:\prod_{i=1}^pS_{\xi_i}(\tau)[u_{n-1}],
\end{equation}
where $(\xi_1,\cdots, \xi_p)$ is a random permutation of $\{1,2,\cdots, p\}$ that is independent for every $n$ and $\tau$ is time step. Clearly, one only needs to evaluate $p$ operators per time step, with the same cost as the deterministic ones. However, due to the averaging effect in time evolution (see the related discussion in \cite{jin2020random,li2024random}), the order of convergence is better than first order which is the typical order for deterministic splitting. For theoretical analysis, it is straightforward to justify the order of convergence for splitting methods with linear operators, using the Baker-Campbell-Hausdorff (BCH) formula \cite{hairer2006geometric}. However, for the problem considered, there are both unbounded and nonlinear operators and the analysis is not straightforward.

In this paper, we consider the convergence analysis of the aforementioned random splitting method applied to \eqref{equ:acdrift}.  Motivated by \cite{li2022stability,li2022stability2}, we would first estimate the uniform Sobolev bounds and then establish the stability. Although the three terms in the convected Allen-Cahn equation are not bounded, the action on the numerical solutions can be bounded due to the Sobolev estimates. This allows us to compute the local truncation errors in detail, although there is nonlinear dynamics. Together with the stability of the dynamics of the model itself, we are then able to control the desired numerical bias and expected single run error.
The main results can be summarized as (see Theorem \ref{theo:4-2} and Theorem \ref{theo:4-3} respectively)
\begin{equation}
\mathbb{E}_{\xi}\Vert u_n-u(n\tau)\Vert_{k,p}\leq C\tau^{3/2}
\end{equation}
and
\begin{equation}
\Vert \mathbb{E}_{\xi}u_n-u(n\tau)\Vert_{k,p}\leq C\tau^{2}.
\end{equation}
Note that the expected single run error is proved for $p<\infty$ only.

We remark that the operator splitting or particularly Lie-Trotter splitting have been widely applied for various models, including the stochastic differential equations  \cite{abdulle2015long,buckwar2022splitting} and particularly for Langevin sampling \cite{leimkuhler2013rational,leimkuhler2013robust,li2024second}, as well as other models like the Schr\"odinger type equations \cite{bao2002time,antoine2013computational,zhang2024low}.

The rest of the paper is organized as follows.  
In Section \ref{sec:problem}, we introduce the basic notations and setup for the problem and the method. In Section \ref{sec:boundness}, we establish the boundness and stability for the exact solution and the numerical solution. The propagation of the Sobolev regularity of the numerical solution is crucial for the analysis later. In Section \ref{sec:error}, we analyze the expected single-run error and the bias of the random splitting method. An illustrating numerical example is presented in Section \ref{sec:numerical}, confirming our theoretical results and demonstrating the performance of the random splitting method.  

\section{Setup and notations}\label{sec:problem}

As mentioned above, we will consider the model \eqref{equ:acdrift}
on a torus with length $L$, i.e.,
\begin{gather}
\Omega=\T^d,
\end{gather}
where $\T^d=[0, L]^d$ equipped with the periodic boundary conditions.
Here, $d\ge 2$ is the dimension.

The background velocity is assumed to be a smooth flow and divergence free for the incompressibility. 
\begin{assumption}\label{ass:vassumption}
The velocity field $v(\cdot)$ is infinitely smooth and satisfies
\begin{equation}
\mathrm{div}(v(x))=\nabla\cdot v(x)=0.
\end{equation}
\end{assumption}
Moreover, we also consider smooth initial data in this work for the numerical analysis. 

\subsection{The random splitting method for the model}

For the splitting method of \eqref{equ:acdrift}, we introduce the following notations.
\begin{equation}
\begin{split}
&\mathcal{L}_1u=\mathcal{A}u:=- v(x)\cdot\nabla u,\\
&\mathcal{L}_2u=\mathcal{L}u:=\nu \Delta u ,\\
&\mathcal{L}_3u=\mathcal{R}u:=-f(u).
\end{split}
\end{equation}
The notations $\mathcal{A},\mathcal{L},\mathcal{R}$ represent the advection drift term, the linear diffusion term (Laplacian), and the nonlinear reaction term respectively.
The three subproblems corresponding to \eqref{equ:pro-litt} are given by
the advection equation
\begin{gather}\label{subeq:advec}
\partial_t w=\cA w=-v\cdot\nabla w, \quad w|_{t=0}=w_0,
\end{gather}
the heat equation
\begin{gather}\label{subeq:heat}
\partial_t w=\cL w=\nu \Delta w, \quad w|_{t=0}=w_0,
\end{gather}
and the following reaction equation
\begin{gather}\label{subeq:react}
\partial_t w=\cR w= -f(w), \quad w|_{t=0}=w_0,
\end{gather}
which is essentially an ODE.

The semigroup $\{S_1(t)\}_{t\ge 0}$ is the solution to the advection equation, which can essentially solved by the method of characteristics. We will not, however, use the method of characteristics explicitly in this paper, and will use the property of the PDE directly. In fact, the advection term is antisymmetric and $S_1$ is unitary in $L^2(\Omega)$. Moreover, any $L^p$ norm is preserved.

The semigroup $\{S_2(t)\}_{t\ge 0}$ corresponding to $\cL_2=\nu \Delta$ is simply the heat semigroup \(S_2(t)=\exp(\nu t\Delta )\), which is well-known to be dissipative in various spaces \cite{evans2022partial}. 

The semigroup $\{S_3(t)\}_{t\ge 0}$ corresponding to $\cL_3=\cR$ is the solution to the ODE
\[
\int \frac{dw}{f(w)}=-\int dt.
\]
In the case \eqref{eq:nonlinearterm}, it is given by
\begin{gather}\label{eq:nonlinearsemigroup}
w(t)=S_3(t)[w_0]=\frac{w_0}{\sqrt{w_0^2+(1-w_0^2)e^{-2t}}}.
\end{gather}
This formula will be used in the numerical experiment in section \ref{sec:numerical}. We will, however, not use this explicit formula in the analysis in section \ref{sec:error}. Instead, we will use the equation to estimate various norms for $S_3$, so that the derivation could be generalized to other forms of $f$.

With the semigroups introduced, the random splitting method can be formally formulated as an algorithm shown in Algorithm \ref{alg:randomsplit}.
\begin{algorithm}[!ht]
    \caption{The Random splitting method for the Allen-Cahn model in a flow}
    \label{alg:randomsplit}
    \begin{algorithmic}[1]
        \REQUIRE 
        Time step $\tau>0$, initial data $u_0$, terminal time $T>0$.
        
        \FOR{$n=0: \lceil T/\tau\rceil-1$}
        \STATE Generate a random permutation $\xi^{n}$ of $\{1,2,3\}$ independently of the previous ones and set
        \begin{gather}\label{alog-equ:random-split-term}
        u_{n+1}=\prod_{i=1}^3S_{\xi_i^{n}}(\tau)[u_{n}].
        \end{gather}
        \ENDFOR
    \end{algorithmic}
\end{algorithm}

For later analysis, we introduce some notation. Define the time grid
\begin{gather}
t_n=n\tau,
\end{gather}
where $\tau>0$ is the timestep. Then, $I_n:=[t_{n},t_{n+1})$ is the $n$th subinterval. Let
\[
\b{\xi}:=(\xi^0,\cdots, \xi^{N-1})
\]
be the sequence of random permutations, where $N=\lceil T/\tau\rceil$ is the number of time steps in the computation and
\[
\xi^{n}=(\xi_i^{n})_{i=1}^3
\]
is the random permutation for the subinterval $I_n$.
We then define the random splitting semigroup
\begin{equation}\label{equ:random-split-term}
S^{\xi^n}(\tau):=\prod_{i=1}^3S_{\xi_i^n}(\tau),
\end{equation}
and the mean of this random splitting semigroup
\begin{equation}\label{equ:ex-random-split-term}
S(\tau):=\mathbb{E}_{\xi^n}S^{\xi^n}(\tau).
\end{equation}
Note that the convention in \eqref{eq:randomproduct} is applied for \eqref{alog-equ:random-split-term} and \eqref{equ:random-split-term}.
The definition of $S$ in \eqref{equ:ex-random-split-term} is independent of $n$ since the equation is time-homogeneous.  Moreover, for the model \eqref{equ:acdrift} itself, we introduce its time-continuous semigroup $T(\tau)$ given by
\begin{equation}\label{equ:exact-solution-operator}
T(\tau)u_0:=u(\tau).
\end{equation}

With the above notations, the numerical solution is then given by
\begin{equation}\label{rela-rec}
u_{n+1}=S^{\xi^n}(\tau)[u_{n}],
\end{equation}
and the exact solution $u(\cdot)$ satisfies 
\begin{gather}
u(t_{n+1})=T(\tau)[u(t_n)].
\end{gather}
We also denote the expected numerical solution as $v_n$ so that 
\begin{equation}\label{def:v-n}
v_{n}=\mathbb{E}_{\b{\xi}}u_{n}.
\end{equation}

\subsection{Other notations}

One denotes for any $p\in [1,\infty)$ the standard $L^p$ space by
\[
L^p(\Omega):=\left\{u: \int_{\Omega} |u|^p \,dx<\infty\right\}
\] 
and the corresponding norm is given by
\[
\|u\|_p:=\left(\int_{\Omega} |u|^p \,dx\right)^{1/p}.
\]
For $p=\infty$, one defines
\[
\|u\|_{\infty}=\mathrm{esssup}\{|u(x)|: x\in \Omega\},
\]
which means the smallest constant $M$ such that the Lebesgue measure of $\{x: |u(x)|>M\}$ is zero. As well-known, $\|u\|_{\infty}=\lim_{p\to\infty}\|u\|_p$.

A multi-index for $\R^d$ is a tuple $\alpha=(\alpha_1,\cdots, \alpha_d)$
with $\alpha_i\in \mathbb{N}=\{0,1,2,\cdots\}$. Then, $|\alpha|=\sum_{i=1}^d \alpha_i$ is called the order of $\alpha$, and we will use the following notation of multi-derivatives
\begin{gather}
D^{\alpha}u=\prod_{i=1}^d\left(\frac{\partial}{\partial x_i}\right)^{\alpha_i}u.
\end{gather}

We will also introduce the following
\begin{equation}\label{equ:a}
\Gamma_{\alpha, m}=\begin{cases} \Big\{ \{\beta_1,\beta_2,\cdots,\beta_m\} |\sum_{i=1}^{m}\beta_i=\alpha,|\beta_i|\geqslant 1 \Big\}  &  m\le |\alpha|,  \\
\emptyset & m>|\alpha|.
\end{cases}
\end{equation}
Note that in \eqref{equ:a}, the notation $\{\beta_1,\beta_2,\cdots,\beta_m\}$ means a set of multi-indices so it remains unchanged if the order of the multi-indices inside changes. 

We will provide error analysis in various Sobolev spaces. Recall that 
the $W^{k, p}$ norm is given by
\begin{equation}
\Vert u\Vert_{k,p}=\bigg(\sum_{|\alpha|\leq k}\Vert D^{\alpha}u\Vert_{p}^{p}\bigg)^{1/p},
\end{equation}
where $D^{\alpha}u$ is the weak derivative of the function $u$ (see \cite{evans2022partial}). Then, $W^{k,p}$ space consists of measurable functions that has finite $\|\cdot\|_{k,p}$ norm. 
When $p=+\infty$,  the $\|\cdot\|_{k,p}$ reduces to the following
\begin{equation}
\Vert u\Vert_{k,\infty}=\max_{|\alpha|\leq k}\Vert D^{\alpha}u\Vert_{\infty}.
\end{equation}
Clearly, $L^p(\Omega)=W^{0,p}(\Omega)$, and $\|u\|_p=\|u\|_{0,p}$.

\section{Boundedness and Stability}\label{sec:boundness}

In this section, we establish the boundedness of the solutions, especially the numerical solutions, and the stability of the PDE model.
These are essential to establish the global error estimates of the random splitting method when it is applied to this model.

\subsection{Boundedness of Sobolev norms}

In this subsection, we establish the boundedness of the Sobolev norms with various orders, both for the solutions of the PDE model and for the solutions of the random splitting method. These results would be useful for establishing the stability of the model and the estimate of local truncation errors.

\subsection*{Sobolev norms for the PDE model}

We first establish theorem about the bounds for the PDE model, which is summarized as following.
\begin{proposition}\label{pro:exactsol}
Consider problem \eqref{equ:acdrift} on $\Omega$ and 
suppose that $v$ satisfies Assumption \ref{ass:vassumption}.  
Then for any given $p\in[2,+\infty]$ and $k\in \mathbb{N}$, if $\|u_0\|_{k,p}+\|u_0\|_{\max(k-1,0),\infty}<\infty$, one then has
\begin{equation}
\sup_{t\le T}\Vert u(\cdot, t)\Vert_{k,p}<\infty.
\end{equation}
\end{proposition}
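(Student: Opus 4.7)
My plan is to first establish an $L^\infty$ bound for $u$ directly, and then proceed by induction on the Sobolev order $k$, using the case $(k-1,\infty)$ as the prerequisite for controlling the nonlinearity when proving $(k,p)$. The base case $L^\infty$ comes from a maximum principle argument: since $f(u)u = u^2(u^2-1) \ge 0$ when $|u| \ge 1$, multiplying \eqref{equ:acdrift} by $u$ and using $\nabla\cdot v = 0$ together with the diffusion negativity shows that $|u(\cdot,t)|$ cannot exceed $\max(1,\|u_0\|_\infty)$. For $L^p$ with $2 \le p < \infty$, multiplying by $|u|^{p-2}u$, integrating by parts, and using $\nabla\cdot v = 0$ eliminates the advection contribution and gives a closed estimate
\begin{equation*}
\frac{1}{p}\frac{d}{dt}\|u\|_p^p + \nu(p-1)\int |u|^{p-2}|\nabla u|^2\,dx = \int u|u|^{p-2}(u-u^3)\,dx \le \|u\|_p^p,
\end{equation*}
so $\|u(t)\|_p$ is controlled by Gr\"onwall. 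On a bounded torus this is automatic from the $L^\infty$ bound, but the differential form is what one iterates at higher order.

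For the inductive step, fix $|\alpha| \le k$ and differentiate \eqref{equ:acdrift} by $D^\alpha$. Setting $w = D^\alpha u$ one obtains
\begin{equation*}
\partial_t w + v\cdot\nabla w = \nu\Delta w - D^\alpha f(u) - [D^\alpha, v\cdot\nabla]u.
\end{equation*}
Multiplying by $|w|^{p-2}w$ and integrating, the advection term on the left vanishes thanks to $\nabla\cdot v=0$, the Laplacian term contributes a non-positive dissipation, and what remains to be bounded is $\|D^\alpha f(u)\|_p$ and $\|[D^\alpha, v\cdot\nabla]u\|_p$. Expand $D^\alpha(u^3)$ by the Leibniz rule into sums indexed by $\Gamma_{\alpha,m}$ from \eqref{equ:a} of products $D^{\beta_1}u\cdot D^{\beta_2}u\cdot D^{\beta_3}u$ with $\beta_1+\beta_2+\beta_3 = \alpha$. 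In every such product at most one factor can be of order $k$ (when the other two are $u$ itself); in all other cases every factor has order $\le k-1$. Place the top-order factor in $L^p$ and the remaining two in $L^\infty$, so that
\begin{equation*}
\|D^\alpha(u^3)\|_p \le C\bigl(\|u\|_\infty^2 + \|u\|_{k-1,\infty}^2\bigr)\|u\|_{k,p}.
\end{equation*}
The commutator $[D^\alpha, v\cdot\nabla]u$ only contains derivatives of $u$ of order $\le k$ multiplied by smooth bounded derivatives of $v$ and is treated by the same splitting, giving a similar bound $C(v)\|u\|_{k,p}$. Summing over $|\alpha| \le k$ yields
\begin{equation*}
\frac{d}{dt}\|u\|_{k,p}^p \le C\bigl(1 + \|u\|_{k-1,\infty}^2\bigr)\|u\|_{k,p}^p,
\end{equation*}
and Gr\"onwall closes the estimate once $\|u\|_{k-1,\infty}$ is bounded on $[0,T]$.

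The induction is therefore organized so that the statement at level $(k,p)$ for any $p\in[2,\infty]$ is proven assuming that the statement at level $(k-1,\infty)$ has already been secured; the base case $k=0$ needs no additional $L^\infty$ input since the reaction term is handled as above. Running the inductive step at $p=\infty$ (with $|w|^{p-2}w$ replaced by taking a $p\to\infty$ limit of the $L^p$ estimates, or directly via the maximum principle applied to $D^\alpha u$ with source terms) then feeds the next level of induction. The main technical obstacle is precisely this interlocking: the nonlinearity $u^3$ forces control of $\|u\|_{k-1,\infty}$ in order to bound the lower-order Leibniz terms, which is exactly the reason the hypothesis of the proposition includes $\|u_0\|_{\max(k-1,0),\infty}$ in addition to $\|u_0\|_{k,p}$. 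Once this hierarchy is set up, the whole argument is an energy estimate with commutator bookkeeping and Gr\"onwall, and there is no subtle compactness or regularity issue beyond the smoothness of $v$ from Assumption \ref{ass:vassumption}.
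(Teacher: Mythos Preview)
Your proposal is correct and follows essentially the same route as the paper: energy estimates by testing with $|D^{\alpha}u|^{p-2}D^{\alpha}u$, cancellation of the top-order advection term via $\nabla\cdot v=0$, nonpositivity of the diffusion contribution, Leibniz expansion of $D^{\alpha}(u^3)$ with lower-order factors controlled in $L^\infty$, and induction on $k$ closed by Gr\"onwall. The only cosmetic differences are that the paper obtains the base $L^\infty$ bound by sending $p\to\infty$ in the $p$-independent $L^p$ estimate rather than via the maximum principle, and organizes the nonlinear term so as to keep the lower-order piece as $\|u\|_{k-1,\infty}^2\|u\|_{k-1,p}$ (an inhomogeneous forcing) rather than absorbing it as $\|u\|_{k-1,\infty}^2\|u\|_{k,p}$; both lead to the same conclusion.
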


\begin{proof}
The proof is a straightforward energy estimate together with the induction. 
First, the solution $u$ is smooth on the interval of existence so that all the derivatives taken below can be justified. This said, direct computation gives the following expression for the time derivative of the $p$th power of the $L^p$ norm of the partial derivatives.
\begin{equation}\label{iden:1}
\begin{split}
\frac{d}{d t}\Vert D^{\alpha}u\Vert_p^p&= p\int_{\Omega}|D^{\alpha}u|^{p-2}D^{\alpha}uD^{\alpha}u_tdx=\nu p\int_{\Omega}|D^{\alpha}u|^{p-2}D^{\alpha}u\Delta(D^{\alpha}u)dx\\
&\quad -p\int_{\Omega}|D^{\alpha}u|^{p-2}D^{\alpha}uD^{\alpha}f(u)dx-p\int_{\Omega}|D^{\alpha}u|^{p-2}D^{\alpha}uD^{\alpha}(\nabla\cdot(vu))dx\\
&=:I_1+I_2+I_3.
\end{split}
\end{equation}

Let $k=|\alpha|$. We first show the results for $k=0$. The relation \eqref{iden:1} becomes
\begin{equation*}
\begin{split}
\frac{d}{d t}\Vert u\Vert_p^p &=\nu p\int_{\Omega}|u|^{p-2}u\Delta udx+p\int_{\Omega}|u|^{p-2}u(u-u^3)dx-p\int_{\Omega}|u|^{p-2}u\nabla u\cdot v(x)dx\\
&=-\nu p(p-1)\int_{\Omega}|u|^{p-2}|\nabla u|^2dx
+p\int_{\Omega}(|u|^{p}-|u|^{p+2})dx,
\end{split}
\end{equation*}
where we have used the fact that
\begin{equation}\label{case0:i3}
I_3=-\int_{\Omega}v(x)\cdot\nabla|u|^pdx=\int_{\Omega}(\nabla\cdot v)|u|^pdx=0.
\end{equation}
Then,
\begin{equation*}
\frac{d}{d t}\Vert u\Vert_p^p\leq p\Vert u\Vert_p^p.
\end{equation*}
This gives that
\[
\Vert u(\cdot,t)\Vert_p\leq e^{t}\Vert u_0\Vert_p\leq e^{T}\Vert u_0\Vert_p.
\]
Note that the prefactor is independent of $p$ and thus sending $p\to\infty$ gives the result for $p=\infty$ as well. Hence, the claim holds for $k=0$.

For $k\ge 1$, we perform the argument by induction. Assume the results holds for all $|\alpha|\le k-1$. For $|\alpha|=k$, consider \eqref{iden:1}. The $I_1$ term can be simply estimated by integration by parts.
\begin{equation}\label{inequ:i1}
\begin{split}
I_1 &=-\nu p\int_{\Omega}\nabla(|D^{\alpha}u|^{p-2}D^{\alpha}u)\cdot\nabla(D^{\alpha}u)dx\\
 &=-\nu p(p-1)\int_{\Omega}|D^{\alpha}u|^{p-2}|\nabla(D^{\alpha}u)|^2dx\leq 0.
\end{split}
\end{equation}

Recall that
\[
I_2=-p\int_{\Omega}|D^{\alpha}u|^{p-2}D^{\alpha}uD^{\alpha}f(u)dx.
\]
To compute this, we recall \eqref{equ:a} for $\Gamma_{\alpha, m}$.
Then, it is not hard to see that
\begin{equation}\label{equ:3-4}
D^{\alpha}(f(u))=\sum_{m=1}^{|\alpha|}\sum_{\{\beta_1,\beta_2,\cdots,\beta_m\}\in\Gamma_{\alpha,m}}C_{\alpha,(\beta_1,\beta_2,\cdots,\beta_m)}f^{(m)}(u)\cdot D^{\beta_1}u\cdot D^{\beta_2}u\cdots D^{\beta_m}u.
\end{equation}
Here, $C_{\alpha,\{\beta_1,\beta_2,\cdots,\beta_m\}}$ is some positive integer depending on the set of indices, and it satisfies
\begin{equation*}
\sum_{(\beta_1,\beta_2,\cdots,\beta_m)\in\Gamma_{\alpha,m}}C_{\alpha,(\beta_1,\beta_2,\cdots,\beta_m)}\le k!.
\end{equation*}
If $m\ge 2$, all the $\beta_i$ involved would have lower order than $\alpha$. Hence, one has
\begin{multline*}
I_2=p\int_{\Omega}|D^{\alpha}u|^{p}dx-3p\int_{\Omega}u^2|D^{\alpha}u|^{p}dx
-6p\int_{\Omega}|D^{\alpha}u|^{p-2}D^{\alpha}u \\
\left(\sum_{(\beta_1,\beta_2)\in\Gamma_{\alpha,2}}C_{\alpha,(\beta_{1},\beta_{2})}uD^{\beta_{1}}uD^{\beta_{2}}u 
+\sum_{(\beta_1,\beta_2,\beta_{3})\in\Gamma_{\alpha,3}}C_{\alpha,(\beta_{1},\beta_{2},\beta_{3})}D^{\beta_{1}}uD^{\beta_{2}}uD^{\beta_{3}}u\right)   dx.
\end{multline*}
As we mentioned, all the derivatives in the parenthesis have lower orders than $\alpha$. Hence, one has
\begin{equation}\label{inequ:i2}
\begin{split}
I_2 &\le  p\Vert D^{\alpha}u\Vert_p^p+pC\Vert u\Vert_{|\alpha|-1,\infty}^2\sum_{\beta_1} \int_{\Omega}|D^{\alpha}u|^{p-1}|D^{\beta_1}u|dx\\
& \le p\Vert D^{\alpha}u\Vert_p^p+pC\Vert u\Vert_{k-1,\infty}^2\Vert u\Vert_{k-1,p}  \Vert D^{\alpha}u\Vert_{p}^{p-1}.
\end{split}
\end{equation}

For $I_3$, we first note that by Leibniz formula  that
\begin{equation*}
D^{\alpha}\nabla\cdot(vu)=\sum_{\beta: \beta\leq \alpha}C_{\alpha}^{\beta}D^{\beta}v\cdot\nabla(D^{\alpha-\beta}u).
\end{equation*}
The $\beta=0$ term contains the highest derivative but integration by parts and the divergence-free condition of $v$ gives that
\begin{equation*}
p\int_{\Omega}|D^{\alpha}u|^{p-2}D^{\alpha}u\nabla(D^{\alpha}u)\cdot vdx=\int_{\Omega}v\cdot\nabla|D^{\alpha}u|^pdx=0.
\end{equation*}
Consequently, since $v$ is smooth, one obtains
\begin{equation}\label{inequ:i3}
\begin{split}
I_3 &=-p\sum_{\beta\leq\alpha,\beta\neq 0}C_{\alpha}^{\beta}\int_{\Omega}|D^{\alpha}u|^{p-2}D^{\alpha}uD^{\beta}v\cdot\nabla(D^{\alpha-\beta}u)dx\\
&\le p C \sum_{|\alpha'| \le |\alpha|}\int |D^{\alpha}u|^{p-1}
|D^{\alpha'}u|\,dx
\le pC\Vert D^{\alpha}u\Vert_p^{p-1}\Vert u\Vert_{k,p}.
\end{split}
\end{equation}
Here $C$ is a constant that depends on $k$ and $d$ but is independent of $p$. 

Taking the sum of formula \eqref{iden:1} over $|\alpha|\leq k$, one then has:
\begin{equation*}
\frac{d}{d t}\Vert u\Vert_{k,p}^p\leq pC\Vert u\Vert_{k,p}^p+pC\Vert u\Vert_{k,p}^{p-1}\Vert u\Vert_{k-1,\infty}^2\Vert u\Vert_{k-1,p},
\end{equation*}
which implies that
\begin{equation*}
\frac{d}{d t}\Vert u\Vert_{k,p}\leq C\Vert u\Vert_{k,p}+C\Vert u\Vert_{k-1,\infty}^2\Vert u\Vert_{k-1,p},
\end{equation*}
where $C$ is independent of $p$.
 
Combining with the induction hypothesis, applying the Gr\"onwall's inequality gives the result for $k$. Note that since the coefficient is independent of $p$, the result holds for $p=\infty$ as well. The proof is complete.
\end{proof}

\subsection*{Sobolev norms for the random splitting method}

Next, we show that the numerical solutions of the random splitting method are bounded in Sobolev spaces.

We first summarize the properties of the semigroups in several lemmas.
\begin{lemma}\label{lmm:adv}
Consider the semigroup $S_1(t)$ corresponding to \eqref{subeq:advec} for the advection term. Then, $S_1$ is unitary in $L^2$ and for any $p\in [1,\infty]$,
\begin{gather}
\|S_1(t)[w_0]\|_p=\|w_0\|_p.
\end{gather}
Moreover, for any $p\in [2,\infty]$ and $k\ge 0$, there is a constant $C$ such that
\begin{equation}
\Vert S_{1}(t)[w_0]\Vert_{k,p}\leq e^{Ct}\Vert w_0\Vert_{k,p}.
\end{equation}
\end{lemma}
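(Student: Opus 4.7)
The lemma breaks into two claims: exact $L^p$-preservation by $S_1$, and a Gr\"onwall-type growth estimate for the higher Sobolev norms. I would attack both via direct energy methods on the PDE $\partial_t w + v\cdot\nabla w = 0$, exactly in the spirit of the proof of Proposition \ref{pro:exactsol} — in particular, exploiting the divergence-free hypothesis on $v$ to make the transport contribution vanish.

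For the first claim, I would multiply the advection equation by $|w|^{p-2}w$ (for $p\in[2,\infty)$) or by $\operatorname{sgn}(w)$ (for $p=1$) and integrate over $\Omega=\T^d$. The key identity is
\begin{equation*}
\frac{d}{dt}\|w\|_p^p = -p\int_{\Omega}|w|^{p-2}w\,(v\cdot\nabla w)\,dx = -\int_{\Omega}v\cdot\nabla |w|^p\,dx = \int_{\Omega}(\nabla\cdot v)|w|^p\,dx = 0,
\end{equation*}
which is precisely the computation already used for $I_3$ in \eqref{case0:i3}. This yields $\|S_1(t)[w_0]\|_p=\|w_0\|_p$ for $p<\infty$, and the $p=\infty$ case follows from $\|u\|_\infty=\lim_{p\to\infty}\|u\|_p$ together with the fact that the prefactor $1$ is independent of $p$.

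For the Sobolev estimate, I would apply $D^\alpha$ to the advection equation and use Leibniz:
\begin{equation*}
\partial_t D^\alpha w + v\cdot\nabla D^\alpha w = -\sum_{\beta\le\alpha,\;\beta\neq\alpha}\binom{\alpha}{\beta}D^{\alpha-\beta}v\cdot\nabla D^\beta w.
\end{equation*}
The crucial observation is that every term on the right involves $\nabla D^\beta w$ with $|\beta|\le|\alpha|-1$, hence a derivative of $w$ of order at most $|\alpha|\le k$; there is no loss of derivatives. Multiplying by $|D^\alpha w|^{p-2}D^\alpha w$ and integrating, the $v\cdot\nabla D^\alpha w$ term again disappears by the divergence-free computation above, while the commutator source is controlled via H\"older by
\begin{equation*}
pC\sum_{|\alpha'|\le|\alpha|}\int_{\Omega}|D^\alpha w|^{p-1}|D^{\alpha'}w|\,dx\le pC\|D^\alpha w\|_p^{p-1}\|w\|_{k,p},
\end{equation*}
where $C$ depends on $\|v\|_{k,\infty}$, $k$ and $d$, and is independent of $p$ (this is where the smoothness of $v$ on the torus from Assumption \ref{ass:vassumption} enters).

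Summing over $|\alpha|\le k$ gives $\tfrac{d}{dt}\|w\|_{k,p}^p\le pC\|w\|_{k,p}^p$, i.e. $\tfrac{d}{dt}\|w\|_{k,p}\le C\|w\|_{k,p}$, and Gr\"onwall delivers $\|S_1(t)[w_0]\|_{k,p}\le e^{Ct}\|w_0\|_{k,p}$; passing $p\to\infty$ on both sides, with $C$ uniform in $p$, covers $p=\infty$. I do not anticipate a serious obstacle — the only point requiring care is verifying that the Leibniz expansion for $D^\alpha(v\cdot\nabla w)$ never produces a derivative of $w$ of order $k+1$ (so that the estimate closes in $\|\cdot\|_{k,p}$ rather than requiring the $(k+1)$-norm), which is exactly ensured by extracting the $\beta=\alpha$ term separately and using $\nabla\cdot v=0$ to kill it.
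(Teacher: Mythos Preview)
Your proposal is correct and follows essentially the same route as the paper: the divergence-free identity \eqref{case0:i3} for $L^p$-conservation, the Leibniz/commutator estimate \eqref{inequ:i3} for the Sobolev norms, summation and Gr\"onwall, and the $p\to\infty$ limit with a $p$-independent constant. The only cosmetic difference is that the paper verifies unitarity in $L^2$ by showing the inner product $\int w(t)\varphi(t)\,dx$ is conserved for any two solutions, whereas you deduce only the isometry from $\|\cdot\|_2$-preservation; since $S_1(t)$ is invertible this is equivalent.
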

\begin{proof}
Recall the corresponding equation for $w(t):=S_1(t)[w_0]$:
\begin{equation*}
\partial_t w=-v(x)\cdot\nabla w
\end{equation*}
with $w(0)=w_0$.
For any $\varphi_0\in L^2$ and the corresponding $\varphi(t)=S_1(t)[\varphi_0]$, one has
\[
\begin{split}
\frac{d}{dt}\int w(t)\varphi(t)\,dx &=\int \left((-v\cdot\nabla w(t)) \varphi(t)
+w(t)(-v\cdot\nabla \varphi(t))\right)\,dx\\
&=\int \left((\nabla\cdot v) w(t) \varphi(t)+w(t)v\cdot\nabla\varphi(t)
+w(t)(-v\cdot\nabla \varphi(t))\right)\,dx=0.
\end{split}
\]
Hence, the semigroup is unitary and thus $\|w(t)\|_2$ is unchanged.

For any $p\in (1,\infty)$, one can computes
\[
\frac{d}{dt}\int_{\Omega} |w|^p\,dx=-p\int_{\Omega}|w|^{p-2}w\nabla w\cdot v(x)dx
=-\int_{\Omega} v\cdot\nabla |v|^p\,dx=\int_{\Omega} |v|^p(\nabla\cdot v)\,dx=0.
\]
Taking the limit $p\to 1^+$ gives the result for $p=1$, which can also be justified by considering the regularized quantity $\int\sqrt{\epsilon+|v|^2}\,dx$ for $L^1$ norm.

Next for any $p\in[2,\infty)$ and $k\in \mathbb{N}$, by the same calculation as in the proof of Proposition \ref{pro:exactsol}, and particularly by \eqref{inequ:i3}, one has
\begin{gather}
\frac{d}{dt}\Vert D^{\alpha}u\Vert_p^p \le pC\Vert D^{\alpha}u\Vert_p^{p-1}\Vert u\Vert_{k,p}.
\end{gather}
Summing over $|\alpha|\leq k$ gives
\[
\frac{d}{dt}\Vert u\Vert_{k,p}^p \le pC\Vert u\Vert_{k,p}^{p-1}\Vert u\Vert_{k,p} \quad
\Longrightarrow  \quad \frac{d}{d t}\Vert u\Vert_{k,p}\leq C\Vert u\Vert_{k,p}.
\]
This gives the result for $p\in [2,\infty)$.
Note that the coefficient $C$ is independent of $p$ so that the claims about $S_1$ hold for all $p\in [2,\infty]$.
\end{proof}

The following result for heat semigroup is well-known in PDE theory but we still attach a simple proof for the convenience of readers.
\begin{lemma}
Consider the heat semigroup $S_2(t)$ for $\nu \Delta$ corresponding to 
\eqref{subeq:heat}. Then, for any $p\in [2,\infty]$ and $k\ge 0$, one has
\begin{equation}
\Vert S_{2}(t)[w_0]\Vert_{k,p}\leq \Vert w_0\Vert_{k,p}.
\end{equation}
\end{lemma}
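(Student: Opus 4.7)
The plan is a direct energy estimate in the spirit of Proposition \ref{pro:exactsol}, but substantially simpler because the heat semigroup has no lower-order or nonlinear contributions. I would first handle $p \in [2,\infty)$ and $k = 0$, then lift to arbitrary $k$ by commuting derivatives with $\Delta$, and finally recover $p = \infty$ by passing to the limit.

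For the base case $k=0$ and $p \in [2,\infty)$, let $w(t) = S_2(t)[w_0]$, so $\partial_t w = \nu \Delta w$. I would differentiate $\|w\|_p^p$ in time, obtaining
\begin{equation*}
\frac{d}{dt}\|w\|_p^p = p\nu \int_{\Omega} |w|^{p-2} w \,\Delta w\, dx = -\nu p(p-1)\int_{\Omega} |w|^{p-2}|\nabla w|^2\, dx \le 0,
\end{equation*}
after integration by parts (periodic boundary conditions remove boundary terms). Hence $\|w(t)\|_p \le \|w_0\|_p$.

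For $k \ge 1$, the key observation is that $D^{\alpha}$ commutes with $\nu\Delta$, so each partial derivative $D^{\alpha}w$ also satisfies the heat equation with initial datum $D^{\alpha}w_0$. Applying the $k=0$ estimate to $D^{\alpha}w$ for every $|\alpha|\le k$ and summing the $p$-th powers yields
\begin{equation*}
\|w(t)\|_{k,p}^p = \sum_{|\alpha|\le k}\|D^{\alpha}w(t)\|_p^p \le \sum_{|\alpha|\le k}\|D^{\alpha}w_0\|_p^p = \|w_0\|_{k,p}^p,
\end{equation*}
which gives the $W^{k,p}$ contraction for all finite $p \ge 2$.

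For $p = \infty$, since the constant in the above bound is exactly $1$ (independent of $p$), I would pass to the limit $p \to \infty$, using the standard fact that $\|u\|_{\infty}=\lim_{p\to\infty}\|u\|_p$ recalled earlier in the notation section, together with the definition $\|u\|_{k,\infty}=\max_{|\alpha|\le k}\|D^{\alpha}u\|_{\infty}$. Alternatively, one can invoke the maximum principle or the fact that the heat kernel is a probability density on $\mathbb{T}^d$, which immediately gives $\|S_2(t)[w_0]\|_{\infty}\le \|w_0\|_{\infty}$, and apply it to each $D^{\alpha}w_0$. I do not expect any real obstacle here; the only point requiring minor care is justifying the integration by parts and the differentiation under the integral, which is standard because smooth initial data produce smooth solutions of the heat equation and the periodic setting kills boundary terms.
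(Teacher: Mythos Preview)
Your proposal is correct and follows essentially the same approach as the paper: the paper also differentiates $\|D^{\alpha}w\|_p^p$ in time, integrates by parts to get $-\nu p(p-1)\int |D^{\alpha}w|^{p-2}|\nabla D^{\alpha}w|^2\,dx\le 0$, and then passes to $p=\infty$. The only cosmetic difference is that the paper writes the computation directly for $D^{\alpha}w$ rather than first doing $k=0$ and then invoking commutation, but the content is identical.
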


\begin{proof}
Consider the associated problem with initial value $w(0)=w_0$:
\begin{equation*}
\partial_t w=\mathcal{L}_2 u=\nu \Delta w.
\end{equation*}
Then we can compute that,
\begin{equation*}
\begin{split}
\frac{\partial}{\partial t}\Vert D^{\alpha}u\Vert_{p}^p&=\nu p\int_{\Omega}|D^{\alpha}u|^{p-2} D^{\alpha}u\Delta(D^{\alpha}u)dx\\
&=-\nu p(p-1)\int_{\Omega}|D^{\alpha}u|^{p-2}|\nabla(D^{\alpha}u)|^2dx\leq  0.
\end{split}
\end{equation*}
Hence, the Sobolev norms are nonincreasing for all $p\ge 2$.
Let $p\rightarrow+\infty$, we get the result for $\Vert\cdot \Vert_{k,\infty}$.
\end{proof}

Last, we consider the nonlinear reaction semigroup.
\begin{lemma}\label{lmm:react}
Consider the semigroup $S_3(t)$ for the nonlinear reaction term corresponding to \eqref{subeq:react}.  Then for all $p\in[2,+\infty]$ and $k\in\mathbb{N}_+$, there exists a polynomial $p_k(\cdot)$ such that
\begin{equation}
\Vert S_{3}(t)[w_0]\Vert_{k,p}\leq  e^{t}\Vert w_0\Vert_{k,p}+p_k(\Vert w_0\Vert_{k-1,\infty}, \Vert w_0\Vert_{k-1,p}, e^t)(e^t-1).
\end{equation}
If $k=0$, the polynomial does not depend on $\|w_0\|_{k-1,\infty}$.
\end{lemma}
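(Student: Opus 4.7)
The plan is to mirror the energy/induction method used in Proposition \ref{pro:exactsol} and the preceding two semigroup lemmas, but now there is no dissipative Laplacian to absorb unwanted terms, so the estimates must be bootstrapped purely through the polynomial nonlinearity $f(w)=w^3-w$.

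First I would dispose of the base case $k=0$. Writing $w(t)=S_3(t)[w_0]$ and testing $\partial_t w=w-w^3$ against $|w|^{p-2}w$, one gets
\begin{equation*}
\frac{d}{dt}\|w\|_p^p = p\!\int_\Omega (|w|^p-|w|^{p+2})\,dx \le p\|w\|_p^p,
\end{equation*}
so $\|w(t)\|_p\le e^t\|w_0\|_p$, with a $p$-independent constant, and then $p\to\infty$ handles the $L^\infty$ case. This gives the lemma at $k=0$ with the polynomial identically zero (consistent with the caveat that the polynomial does not involve $\|w_0\|_{k-1,\infty}$ when $k=0$).

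For the inductive step I would fix $k\ge 1$, assume the lemma for all lower orders, and for a multi-index $|\alpha|=k$ differentiate $\|D^\alpha w\|_p^p$. Applying the formula \eqref{equ:3-4} to $D^\alpha f(w)$, the top-order contribution separates cleanly: the $m=1$ piece produces
\begin{equation*}
p\!\int_\Omega |D^\alpha w|^p\,dx \;-\; 3p\!\int_\Omega w^2|D^\alpha w|^p\,dx,
\end{equation*}
whose second summand is nonpositive and can be discarded. The $m=2,3$ pieces only involve $D^{\beta_i}w$ with $|\beta_i|\le k-1$; using H\"older plus pointwise bounds by $\|w\|_{k-1,\infty}$ on the redundant factors and $\|D^{\beta}w\|_p\le\|w\|_{k-1,p}$ on the remaining one, I would obtain
\begin{equation*}
\frac{d}{dt}\|D^\alpha w\|_p^p \le p\|D^\alpha w\|_p^p + pC\,\|D^\alpha w\|_p^{p-1}\bigl(\|w\|_{k-1,\infty}^2+\|w\|_{k-1,\infty}\bigr)\|w\|_{k-1,p},
\end{equation*}
with $C=C(k,d)$ independent of $p$. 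Summing over $|\alpha|\le k$, dividing by $p\|w\|_{k,p}^{p-1}$, and simplifying yields
\begin{equation*}
\frac{d}{dt}\|w\|_{k,p}\le \|w\|_{k,p} + C\,Q\bigl(\|w\|_{k-1,\infty},\|w\|_{k-1,p}\bigr),
\end{equation*}
where $Q$ is an explicit polynomial.

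Finally I would invoke the induction hypothesis, which already bounds $\|w(s)\|_{k-1,p}$ and $\|w(s)\|_{k-1,\infty}$ by polynomials in $\|w_0\|_{k-2,\infty},\|w_0\|_{k-1,p},\|w_0\|_{k-1,\infty}$ and $e^s\le e^t$, so that $Q$ along the trajectory is dominated by a polynomial $\tilde p_k(\|w_0\|_{k-1,\infty},\|w_0\|_{k-1,p},e^t)$. Integrating the scalar Gr\"onwall inequality $y'\le y+\tilde p_k$ produces
\begin{equation*}
\|w(t)\|_{k,p}\le e^t\|w_0\|_{k,p}+\tilde p_k\bigl(\|w_0\|_{k-1,\infty},\|w_0\|_{k-1,p},e^t\bigr)(e^t-1),
\end{equation*}
which is exactly the claimed form. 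Because every constant encountered is $p$-independent, sending $p\to\infty$ extends the bound to $W^{k,\infty}$. The main obstacle is bookkeeping: carefully tracking that the prefactors $C$ remain independent of $p$ through the Faa di Bruno/Leibniz expansion, and that the polynomial dependence is preserved across the induction, rather than degrading to something worse (e.g., exponentials inside exponentials). Once this is organized with \eqref{equ:a} and \eqref{equ:3-4}, all remaining steps are routine energy computations.
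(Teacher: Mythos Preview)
Your proposal is correct and follows essentially the same approach as the paper: the base case $k=0$ via the direct energy estimate $\frac{d}{dt}\|w\|_p^p\le p\|w\|_p^p$, then induction for $k\ge 1$ using the Fa\`a di Bruno--type expansion \eqref{equ:3-4} exactly as in the derivation of \eqref{inequ:i2} in Proposition \ref{pro:exactsol}, leading to the scalar inequality $\frac{d}{dt}\|w\|_{k,p}\le \|w\|_{k,p}+C\|w\|_{k-1,\infty}^2\|w\|_{k-1,p}$ with $C$ independent of $p$. The paper's proof is terser (it simply cites the computation from Proposition \ref{pro:exactsol}), but the structure is identical; your added remark about checking that the induction preserves polynomial dependence in $e^t$ rather than compounding exponentials is a point the paper leaves implicit.
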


\begin{proof}
Consider the nonlinear problem
\begin{equation*}
\partial_t w=-f(w)=w-w^3.
\end{equation*}
First consider the case $k=0$.
\begin{equation*}
\frac{d}{d t}\Vert w\Vert_p^p=p\int_{\Omega}|w|^{p}dx-p\int_{\Omega}|w|^{p+2}dx\leq p\Vert w\Vert_{p}^p.
\end{equation*}
Then it follows that
\begin{equation}\label{case0:i2}
\Vert S_{3}(t)[w_0]\Vert_{p}\leq e^{t}\Vert w_0\Vert_{p}.
\end{equation}
Hence, the claim for $k=0$ holds and we can take $p_0(z_1, z_2, z_3)=0$.

Following the same calculation as that in the proof of Proposition \ref{pro:exactsol}, and in particular the derivation of \eqref{inequ:i2}, one has 
\begin{equation*}
\frac{d}{d t}\Vert u\Vert_{k,p}^p    \leq p\Vert u\Vert_p^p+pC\Vert u\Vert_{k-1,\infty}^2\Vert u\Vert_{k-1,p}  \Vert u\Vert_{k,p}^{p-1},
\end{equation*}
and thus
\begin{equation}\label{inequ:part2}
\frac{d}{d t}\Vert u\Vert_{k,p}\leq \Vert u\Vert_{k,p}+C\Vert u\Vert_{k-1,\infty}^2\Vert u\Vert_{k-1,p}.
\end{equation}
Here, $C$ is independent of $p$. By induction, it is easy to see that the polynomial $p_k$ exists and the desired result holds. 

Moreover, since the coefficient is independent of $p$, the case for $p=\infty$ also holds.
\end{proof}

After proving the results about the semigroups, we then can conclude the most important result in this section, i.e., the propagation of the Sobolev regularity of the numerical solution for the random splitting method.
\begin{theorem}\label{theo:3-2}
Consider problem \eqref{equ:acdrift} on $\Omega$ and suppose that $v$ satisfies Assumption \ref{ass:vassumption}. Consider the numerical solution of Algorithm \ref{alg:randomsplit} given in \eqref{rela-rec}.
Let $T>0$, $p\in[2,+\infty]$ and $k\in \mathbb{N}$. We assume $\|u_0\|_{k,p}+\|u_0\|_{k-1,\infty}<\infty$ if $k\ge 1$ and $\|u_0\|_{k,p}<\infty$ if $k=0$. Then there is a constant $M_{k,p}$ independent of $n$, $\tau$ and the sequence of random permutation $\b{\xi}$ such that
\begin{equation}\label{inequ:num-solu-1}
\sup_{n: (n+1)\tau\le T}\left(\Vert S_{\xi^n_1}(\tau)u_{n}\Vert_{k,p} +\Vert S_{\xi_2^n}(\tau)S_{\xi_1^n}(\tau)u_{n}\Vert_{k,p}+\Vert u_{n+1}\Vert_{k,p}\right) \le M_{k,p}.
\end{equation}
Moreover, the constant $M_{k,p}$ can be chosen to be bounded as $p\to\infty$.
\end{theorem}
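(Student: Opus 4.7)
The plan is to proceed by induction on the Sobolev order $k$, combining the three semigroup estimates from Lemmas 3.1, 3.2, and 3.3 via a discrete Gr\"onwall-type argument with constants independent of $p$, $\tau$, $n$, and $\b{\xi}$. Throughout, I will exploit the fact that the constants $C$ appearing in the three semigroup lemmas are $p$-independent, which is what will eventually let me send $p\to\infty$ without losing anything.

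\textbf{Base case $k=0$.} The three lemmas show that applying a single semigroup for time $\tau$ enlarges the $L^p$ norm by a factor of at most $e^{\tau}$: Lemma 3.1 gives $\|S_1(\tau)w\|_p = \|w\|_p$, the heat lemma gives $\|S_2(\tau)w\|_p \le \|w\|_p$, and Lemma 3.3 with $p_0\equiv 0$ gives $\|S_3(\tau)w\|_p \le e^{\tau}\|w\|_p$. Composing three factors gives $\|u_{n+1}\|_p \le e^{\tau}\|u_n\|_p$, and iterating over $n\tau \le T$ yields $\|u_n\|_p \le e^T\|u_0\|_p$. The intermediate iterates $S_{\xi_1^n}(\tau)u_n$ and $S_{\xi_2^n}(\tau)S_{\xi_1^n}(\tau)u_n$ are bounded the same way, and since all constants are $p$-independent, the bound extends to $p=\infty$.

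\textbf{Inductive step.} Assume the conclusion holds at all orders $k'\le k-1$ and all $p'\in[2,\infty]$, with uniform constants $M_{k',p'}$ that remain bounded as $p'\to\infty$. Lemmas 3.1--3.3, together with the elementary inequality $e^{\tau}-1\le \tau e^T$, then deliver for every single semigroup $S_i(\tau)$ (with $i\in\{1,2,3\}$) and every intermediate iterate $w$ a bound of the form
\[
\|S_i(\tau) w\|_{k,p} \le e^{C\tau}\|w\|_{k,p} + C_k \tau,
\]
where $C$ is a $p$-independent constant from the advection/heat/reaction lemmas and $C_k$ absorbs the polynomial $p_k$ of Lemma 3.3 evaluated at the (inductively bounded) lower-order norms $M_{k-1,\infty}$ and $M_{k-1,p}$. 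Composing three such factors for $u_{n+1}=S_{\xi_3^n}(\tau)S_{\xi_2^n}(\tau)S_{\xi_1^n}(\tau)u_n$ yields
\[
\|u_{n+1}\|_{k,p} \le e^{3C\tau}\|u_n\|_{k,p} + C_k'\tau,
\]
with $C_k'$ independent of $p$, $n$, $\tau$, and $\b{\xi}$. A discrete Gr\"onwall iteration over the at most $N=\lceil T/\tau\rceil$ time steps produces the uniform bound $\|u_n\|_{k,p}\le e^{3CT}\|u_0\|_{k,p} + (C_k'/3C)(e^{3CT}-1)$, which is finite by the hypothesis on $u_0$ and is $p$-uniform. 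The two intermediate quantities $\|S_{\xi_1^n}(\tau)u_n\|_{k,p}$ and $\|S_{\xi_2^n}(\tau)S_{\xi_1^n}(\tau)u_n\|_{k,p}$ are then controlled by applying one or two of the same one-step estimates to the already-bounded $u_n$, producing bounds of the same form.

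\textbf{Main obstacle.} The delicate point is the coupling in Lemma 3.3: the $W^{k,p}$ bound for the reaction semigroup requires control of the $W^{k-1,\infty}$ norm, so the induction cannot be run for a single fixed $p$ in isolation. Instead, one must propagate the bounds for the whole family $p\in[2,\infty]$ simultaneously, carefully tracking that every constant produced by the semigroup lemmas is $p$-independent so that the $p=\infty$ case is available at each inductive step. Once this $p$-uniformity is maintained, the bound obtained for finite $p$ passes to the limit $p\to\infty$ without blowing up, which is exactly the last claim of the theorem.
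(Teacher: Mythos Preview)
Your proposal is correct and follows essentially the same route as the paper: induction on $k$, base case via the three $L^p$ semigroup bounds, inductive step by composing the three one-step $W^{k,p}$ estimates from Lemmas~\ref{lmm:adv}--\ref{lmm:react} into a discrete Gr\"onwall recursion, and then handling the intermediate iterates by one or two further applications of the same one-step bounds. Your explicit flagging of the $W^{k-1,\infty}$ coupling in Lemma~\ref{lmm:react}---and the consequent need to run the induction simultaneously across all $p\in[2,\infty]$ so that $M_{k-1,\infty}$ is available at each stage---is exactly the mechanism the paper uses, though the paper leaves this point more implicit.
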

\begin{proof}
First consider $k=0$.
By Lemma \ref{lmm:adv}-\ref{lmm:react}, one has for any $p\in [2,\infty]$
\begin{equation*}
\begin{split}
& \Vert S_1(\tau)w_0\Vert_p=\Vert w_0\Vert_p,\\
& \Vert S_{2}(\tau)w_0\Vert_p\le \Vert w_0\Vert_p,\\
& \Vert S_{3}(\tau)w_0\Vert_p\le  e^{\tau}\Vert w_0\Vert_p.
\end{split}
\end{equation*}
Then for any $\b{\xi}$ and any $n$, $\Vert u_n\Vert_p\leq e^{\tau}\Vert u_{n-1}\Vert_p$. Then, it is evident that one can choose $M_{0,p}=e^T\Vert u_0\Vert_{p}$.

For $k=\ell$ with $\ell\ge 1$, we do induction and assume that the claims hold for $k\le \ell-1$.
By Lemma Lemma \ref{lmm:adv}-\ref{lmm:react}, for all $p\in[2,+\infty]$, $k\in\mathbb{N}$, one has for $w_0=u_n$ for some $n$ that
\begin{equation}\label{auxeq:semigroup}
\begin{split}
&\Vert S_1(\tau)w_0\Vert_{\ell,p}\leq e^{C\tau}\Vert w_0\Vert_{\ell,p},\\
&\Vert S_{2_1}(\tau)w_0\Vert_{\ell,p}\leq \Vert w_0\Vert_{\ell,p}, \\
& \Vert S_{3}(\tau)w_0\Vert_{\ell,p}\leq e^{\tau}\Vert w_0\Vert_{\ell,p}+\tilde{M}_{\ell,p}(e^{\tau}-1),\\
\end{split}
\end{equation}
where $\tilde{M}_{k,p}$ depends on the constants for $k\le \ell-1$ and $T$, but independent of $n$, $\tau$ and $\b{\xi}$.
Then, it holds that
\[
\|u_{n+1}\|_{\ell, p}\le e^{(C+1)\tau}\|u_n\|_{\ell, p}+\tilde{M}_{\ell,p}e^{C\tau}(e^{\tau}-1).
\]
A simple induction gives the desired result for $u_n$.
For the intermediate solutions $S_{\xi^n_1}(\tau)u_{n}$ and $S_{\xi_2^n}(\tau)S_{\xi_1^n}(\tau)u_{n}$, using the properties in \eqref{auxeq:semigroup}, the result then follows.
\end{proof}

\subsection{Stability of the Model}

In this subsection, we establish the stability of the original PDE model. Namely, when the initial values are close, the solutions stay close. 
The result is the following.
\begin{theorem}\label{theo:3-3}
Consider the model \eqref{equ:acdrift} and suppose Assumption \ref{ass:vassumption} holds.
Let $u_i(t)$ ($i=1,2$) be two solutions with initial data $u_i(0)$. Let $p\in [2,\infty]$, $k\in\mathbb{N}$, and we assume 
\begin{gather}
u_i(0)\in W^{k,p}\cap W^{\max(k-1,0),\infty}.
\end{gather}
Then there exists $C>0$ that could depend on the initial value but independent of $p$ such that
\begin{equation}
\Vert u_1(\tau)-u_2(\tau)\Vert_{k,p}\le e^{C\tau}\Vert u_1(0)-u_2(0)\Vert_{k,p}.
\end{equation}
\end{theorem}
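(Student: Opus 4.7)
The plan is to set $w := u_1 - u_2$, derive a linear-looking PDE for $w$ with bounded coefficients, and then run essentially the same $W^{k,p}$ energy estimates as in the proof of Proposition \ref{pro:exactsol}. The key algebraic observation is that
\[
f(u_1)-f(u_2)=(u_1^3-u_2^3)-(u_1-u_2)=w\cdot g,\qquad g:=u_1^2+u_1u_2+u_2^2-1,
\]
so that $w$ satisfies
\[
\partial_t w+v\cdot\nabla w=\nu\Delta w - g\,w,\qquad w(0)=u_1(0)-u_2(0).
\]
By Proposition \ref{pro:exactsol} applied under the assumption $u_i(0)\in W^{k,p}\cap W^{\max(k-1,0),\infty}$, the quantities $\|u_i(\cdot,t)\|_{k,p}$ and $\|u_i(\cdot,t)\|_{\max(k-1,0),\infty}$ are bounded on $[0,T]$ by a constant depending only on the initial data; in particular $\|g(\cdot,t)\|_\infty$ (and, when $k\ge 1$, the $W^{k-1,\infty}$ norm of $g$) is uniformly controlled.

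For the base case $k=0$, I would compute $\frac{d}{dt}\|w\|_p^p$. The Laplacian contributes the non-positive term $-\nu p(p-1)\int|w|^{p-2}|\nabla w|^2\,dx$, the advection term vanishes exactly as in \eqref{case0:i3} using $\nabla\cdot v=0$, and the reaction term is bounded by $p\|g\|_\infty\|w\|_p^p\le pC\|w\|_p^p$. Gr\"onwall then gives $\|w(\tau)\|_p\le e^{C\tau}\|w(0)\|_p$ with $C$ independent of $p$, so the $p=\infty$ case follows by sending $p\to\infty$.

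For $k\ge 1$, I would induct on $k$ and mimic \eqref{iden:1} with $D^{\alpha}w$ in place of $D^{\alpha}u$. The $I_1$ term is non-positive after integration by parts as in \eqref{inequ:i1}. The advection piece $I_3$ is handled exactly as in \eqref{inequ:i3}: the top-order term $p\int|D^\alpha w|^{p-2}D^\alpha w\, v\cdot\nabla D^\alpha w\,dx$ vanishes by the divergence-free condition, and the remaining Leibniz terms are bounded by $pC\|D^\alpha w\|_p^{p-1}\|w\|_{k,p}$. For the reaction piece, I would expand $D^\alpha(gw)$ by the Leibniz formula: the $\beta=0$ contribution yields $-p\int g|D^\alpha w|^p\,dx$, controlled by $p\|g\|_\infty\|D^\alpha w\|_p^p$, while the lower-order contributions involve $D^\beta g\cdot D^{\alpha-\beta}w$ with $|\beta|\ge 1$ and are estimated in $L^p$ by $C\|g\|_{k-1,\infty}\|w\|_{k-1,p}$, using the uniform control on $\|u_i\|_{k-1,\infty}$ provided by Proposition \ref{pro:exactsol}. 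Summing over $|\alpha|\le k$ and dividing by $\|w\|_{k,p}^{p-1}$ yields
\[
\frac{d}{dt}\|w\|_{k,p}\le C\|w\|_{k,p}+C\|w\|_{k-1,p}
\]
with $C$ independent of $p$. The inductive hypothesis bounds $\|w(\tau)\|_{k-1,p}\le e^{C\tau}\|w(0)\|_{k-1,p}\le e^{C\tau}\|w(0)\|_{k,p}$, so Gr\"onwall closes the estimate and the uniformity in $p$ carries through to $p=\infty$.

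The main obstacle is tracking the nonlinearity in higher Sobolev norms: one must verify that all intermediate constants depend only on the (time-uniform) Sobolev norms of $u_1,u_2$ guaranteed by Proposition \ref{pro:exactsol}, and not on $p$, so the estimate persists in the $p\to\infty$ limit. The factorization $f(u_1)-f(u_2)=gw$ is what reduces the problem to a linear reaction-advection-diffusion equation with bounded coefficient $g$, which is the decisive simplification; after that, the argument is structurally parallel to the proof of Proposition \ref{pro:exactsol}.
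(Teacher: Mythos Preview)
Your proposal is correct and follows essentially the same route as the paper's proof: the difference equation, the nonpositivity of the diffusion term, the vanishing of the top-order advection term via $\nabla\cdot v=0$ together with the commutator estimate for the lower-order pieces, induction on $k$, and the $p$-independent constants allowing passage to $p=\infty$ are all identical. The only cosmetic difference is that you use the explicit factorization $f(u_1)-f(u_2)=gw$ with $g=u_1^2+u_1u_2+u_2^2-1$, whereas the paper writes the same content as $f(u_1)-f(u_2)=\int_0^1 f'(u_2+t(u_1-u_2))\,dt\cdot w$ for $k=0$ and, for $k\ge 1$, as an abstract decomposition $D^\alpha(f(u_1)-f(u_2))=\sum_{\beta\le\alpha}Q_\beta\,D^\beta(u_1-u_2)$ with $Q_\beta$ bounded via Proposition~\ref{pro:exactsol}.

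One small point to tighten: in your Leibniz expansion of $D^\alpha(gw)$, the term with $|\beta|=|\alpha|=k$ is $D^\alpha g\cdot w$, and $\|D^\alpha g\|_\infty$ would require $\|u_i\|_{k,\infty}$, not $\|g\|_{k-1,\infty}$ as you wrote; the paper's $Q_0$ carries the same top-order derivatives of $u_i$ and is handled in exactly the same way, so this is a shared minor looseness rather than a defect of your approach relative to the paper's.
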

\begin{proof}

First consider $k=0$, namely the stability in $L^p(\T^d)$. By the result in 
Proposition \ref{pro:exactsol}, $u_i(\cdot)\in L^{\infty}(0,T; L^{\infty})$.
 Taking the difference between the equations for $u_1$ and $u_2$, one obtains that
\begin{equation*}
\partial_t(u_1-u_2)+v\cdot\nabla(u_1-u_2)=\nu \Delta(u_1-u_2)-[f(u_1)-f(u_2)].
\end{equation*}
Then, one can compute directly that
\begin{equation*}
\begin{split}
\frac{d}{d t}\Vert u_1-u_2\Vert_p^p 
&=p\nu \int_{\Omega} |u_1-u_2|^{p-2}(u_1-u_2)\Delta(u_1-u_2)dx\\
&\quad -p\int_{\Omega} |u_1-u_2|^{p-2}(u_1-u_2)[f(u_1)-f(u_2)]dx\\
&\quad -p\int_{\Omega} |u_1-u_2|^{p-2}(u_1-u_2) v\cdot\nabla(u_1-u_2)dx
=: I_1+I_2+I_3.
\end{split}
\end{equation*}

It is straightforward to find that
\begin{equation*}
\begin{split}
I_1 &=-p\nu\int_{\Omega} \nabla(|u_1-u_2|^{p-2}(u_1-u_2))\nabla(u_1-u_2)dx\\
&=-\nu p(p-1)\int_{\Omega}|u_1-u_2|^{p-2}|\nabla(u_1-u_2)|^2dx\le 0
\end{split}
\end{equation*}
and that
\begin{equation*}
I_3=-p\int_{\Omega} |u_1-u_2|^{p-2}(u_1-u_2)v(x)\cdot\nabla(u_1-u_2)dx=-\int_{\Omega}v(x)\cdot\nabla|u_1-u_2|^{p}dx=0,
\end{equation*}
due to the fact that $\nabla\cdot v(x)=0$.

Next, we estimate $I_2$.
Noting that
\begin{equation*}
f(u_1)-f(u_2)=\int_{0}^{1}f'(u_2+t(u_1-u_2))(u_1-u_2)\,dt
\end{equation*}
and that
\begin{equation*}
\bigg|\int_{0}^{1}f'(u_2+t(u_1-u_2))dt\bigg|\le C
\end{equation*}
for some $C$ independent of $p$
according to Proposition \ref{pro:exactsol}, one then finds that
\begin{equation*}
\frac{d}{d t}\Vert u_1-u_2\Vert_p^p\le  Cp\Vert u_1-u_2\Vert_p^p.
\end{equation*}
Hence,
\[
\Vert u_1(t)-u_2(t)\Vert_p \le 
e^{Ct}\Vert u_1(0)-u_2(0)\Vert_p.
\]
Since $C$ is independent of $p$, taking $p\to\infty$ then gives the estimate for $L^{\infty}$ norm.

Now, we move to the cases for $k=\ell\ge 1$. We will do the estimates by induction. Suppose that the estimates have already been established for all Sobolev norms with order of derivatives $k\le \ell-1$ so that $u_i(\cdot)\in L^{\infty}(0, T; W^{\ell-1,\infty})$ by Proposition \ref{pro:exactsol}. Then, consider a weak derivative $D^{\alpha}$ with $|\alpha|=\ell$. One has
\begin{equation*}
\partial_tD^{\alpha}(u_1-u_2)+D^{\alpha}(v(x)\cdot\nabla(u_1-u_2))=\nu \Delta(D^{\alpha}u_1-D^{\alpha}u_2)-D^{\alpha}[f(u_1)-f(u_2)].
\end{equation*}
One can the compute similarly that
\begin{equation*}
\begin{split}
\frac{d}{d t}\Vert D^{\alpha}(u_1-u_2)\Vert_p^p&=p\nu\int_{\Omega} |D^{\alpha}(u_1-u_2)|^{p-2}(D^{\alpha}(u_1-u_2))\Delta(D^{\alpha}(u_1-u_2))dx\\
&\quad -p\int_{\Omega} |D^{\alpha}(u_1-u_2)|^{p-2}(D^{\alpha}(u_1-u_2))D^{\alpha}[f(u_1)-f(u_2)]dx\\
&\quad -p\int_{\Omega} |D^{\alpha}(u_1-u_2)|^{p-2}(D^{\alpha}(u_1-u_2))D^{\alpha}(v\cdot\nabla(u_1-u_2))dx\\
&=: I_1'+I_2'+I_3'.
\end{split}
\end{equation*}
The dissipation term $I_1'$ can be treated similarly as above and it is found to be nonpositive:
\begin{multline*}
\int_{\Omega} |D^{\alpha}(u_1-u_2)|^{p-2}(D^{\alpha}(u_1-u_2))\Delta(D^{\alpha}(u_1-u_2))dx\\
=-(p-1)\int_{\Omega}|D^{\alpha}(u_1-u_2)|^{p-2}|\nabla(D^{\alpha}(u_1-u_2)|^2dx\le 0.
\end{multline*}

Now, we consider $I_2'$. By formula \eqref{equ:3-4}, one finds that
\[
D^{\alpha}(f(u_1)-f(u_2))
=\sum_{\beta\le \alpha} Q_{\beta}(u_i, D^{\alpha}u_i) D^{\beta}(u_1-u_2),
\]
where $Q_{\beta}(u_i, D^{\alpha}u_i)$
is some polynomial of $u_1, u_2$ and their derivatives. By Proposition \ref{pro:exactsol}, this term is thus bounded. Hence, by H\"older's inequality, one finds that
\begin{gather}\label{eq:stabilityaux1}
|I_2'|\le p \sum_{\beta\le \alpha} M_{\beta}\|D^{\alpha}(u_1-u_2)\|_p^{p-1}
\|D^{\beta}(u_1-u_2)\|_p.
\end{gather}

Now, consider $I_3'$. Note that
\[
D^{\alpha}(v\cdot\nabla(u_1-u_2))
=v\cdot \nabla D^{\alpha}(u_1-u_2)+[D^{\alpha}, v\cdot\nabla ](u_1-u_2),
\]
and
\[
\int_{\Omega} |D^{\alpha}(u_1-u_2)|^{p-2}(D^{\alpha}(u_1-u_2))v\cdot\nabla D^{\alpha}(u_1-u_2)dx=0,
\]
then one has
\begin{equation*}
\begin{split}
I_3'=&-p\int_{\T^d} |D^{\alpha}(u_1-u_2)|^{p-2}(D^{\alpha}(u_1-u_2))[D^{\alpha}, v\cdot\nabla](u_1-u_2)dx\\
=&-p\sum_{\beta<\alpha}C_{\alpha}^{\beta}\int_{\Omega} |D^{\alpha}(u_1-u_2)|^{p-2}(D^{\alpha}(u_1-u_2)) D^{\alpha-\beta}v\cdot \nabla(D^{\beta}(u_1-u_2)) \,dx.
\end{split}
\end{equation*}
Here $[\cdot,\cdot]$ represents Lie bracket, which means $[X,Y]=XY-YX$.\\
By Assumption \ref{ass:vassumption} and H\"older's inequality, one then has that
\begin{equation}\label{eq:stabilityaux2}
|I_3'|\le ~p\sum_{\beta<\alpha}C_{\alpha}^{\beta}\|D^{\alpha-\beta}v\|_{\infty}\Vert D^{\alpha}(u_1-u_2)\Vert_p^{p-1}  \|\nabla D^{\beta}(u_1-u_2)\|_p.
\end{equation}
Now the difference is that this term involves $\ell$-th order derivatives other than $\alpha$. Hence, we need to consider all the $\ell$-th order derivatives.

With the estimates \eqref{eq:stabilityaux1}--\eqref{eq:stabilityaux2} and by Young's inequality
\[
|ab| \le \frac{p-1}{p}|a|^{\frac{p}{p-1}}+\frac{1}{p}|b|^p,
\]
then by taking the sum for $\alpha$ with $|\alpha|=\ell$, one obtains that
\begin{equation*}
\begin{split}
\frac{d}{dt}\sum_{\alpha: |\alpha|=\ell}
\|D^{\alpha}(u_1-u_2)\|_p^p
\le &p M\sum_{|\alpha|=\ell}\sum_{\beta<\alpha}
\Vert D^{\alpha}(u_1-u_2)\Vert_p^{p-1}  \|\nabla D^{\beta}(u_1-u_2)\|_p\\
+&p M\sum_{|\alpha|=\ell}\sum_{\beta\le\alpha}
\Vert D^{\alpha}(u_1-u_2)\Vert_p^{p-1}  \|D^{\beta}(u_1-u_2)\|_p\\
\le &p M \sum_{|\alpha|=\ell}\Vert D^{\alpha}(u_1-u_2)\Vert_p^{p} 
+M\sum_{|\alpha'|<\ell}\Vert D^{\alpha'}(u_1-u_2)\Vert_p^{p},
\end{split}
\end{equation*}
where $M$ is a generic constant that may depend on $k$ and $d$, but is independent of $p$ and its concrete meaning can change from line to line.
Then, by Gr\"onwall's inequality and the induction assumption, one has
\begin{multline*}
\sum_{\alpha: |\alpha|=\ell}
\|D^{\alpha}(u_1(t)-u_2(t))\|_p^p
\le e^{pM t}\sum_{\alpha: |\alpha|=\ell}
\|D^{\alpha}(u_1(0)-u_2(0))\|_p^p\\
+\int_0^t M e^{pM(t-s)} e^{C_{\ell-1}p s}
\|u_1(0)-u_2(0)\|_{\ell-1,p}^p\,ds.
\end{multline*}
Hence, one eventually has
\begin{multline*}
\|u_1(t)-u_2(t)\|_{\ell,p}^p
\le e^{pM t}\sum_{\alpha: |\alpha|=\ell}
\|D^{\alpha}(u_1(0)-u_2(0))\|_p^p\\
+e^{p (\max(M, C_{\ell-1})+1)t}
\|u_1(0)-u_2(0)\|_{\ell-1,p}^p.
\end{multline*}
Letting $C_\ell:=\max(M, C_{\ell-1})+1$, one then has
\[
\|u_1(t)-u_2(t)\|_{k,p}^p \le e^{pC_\ell t}
\|u_1(0)-u_2(0)\|_{\ell,p}^p,
\]
Then,
\[
\|u_1(t)-u_2(t)\|_{\ell,p} \le e^{C_\ell t}
\|u_1(0)-u_2(0)\|_{\ell,p},
\]
where $C_\ell$ is independent of $p$.
Taking $p\to\infty$ then gives the result for $\|\cdot\|_{\ell,\infty}$ norm.
This then finishes the proof.
\end{proof}

\section{Error Estimates of the Random Splitting Method}\label{sec:error}

With the boundedness of Sobolev norms and stability above, we perform the error estimation for the random splitting method. Note that though the operators involved are not bounded, the actions on the sequence of numerical solutions are bounded due to the boundedness in Sobolev norms.
We first analyze the local truncation error,  and then establish the results about the bias and expected single-run error.

\subsection{The Local Truncation Errors}
In this subsection, we analyze the local truncation errors, for the preparation of our main results later.	
\begin{lemma}\label{lem:localtruncation}
Suppose Assumption \ref{ass:vassumption} holds. For all $p\in[2,+\infty]$ and $k\in\mathbb{N}$, and $a\in W^{k+6, p}\cap W^{k+5,\infty}$, there is a constant $C$ depending on the norm of $\|a\|_{k+6,p}+\|a\|_{k+5,\infty}$ such that
\begin{gather}
\sup_{\xi}\Vert S^{\xi}(\tau)[a]-T(\tau)[a]\Vert_{k,p} \leq C\tau^2,
\end{gather}
and that
\begin{equation}
\Vert S(\tau)a-T(\tau)a\Vert_{k,p} \leq C \tau^3.
\end{equation}
Here, we recall that $S^{\xi}(\tau)[a]=S_{\xi_3}(\tau)S_{\xi_2}(\tau)S_{\xi_1}(\tau)[a]$ for a permutation $\xi$ of $\{1,2,3\}$.
\end{lemma}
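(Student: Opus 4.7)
The plan is to Taylor-expand both the splitting semigroup $S^{\xi}(\tau)$ and the exact semigroup $T(\tau)$ to third order in $\tau$, then compare them term by term. The first-order terms will automatically match since $\sum_i \mathcal{L}_i$ is the generator on both sides; the second-order terms will disagree per sample $\xi$ but coincide in expectation by a combinatorial identity, yielding the two different orders. Throughout I will rely on Theorem \ref{theo:3-2} and Proposition \ref{pro:exactsol} to bound intermediate and exact solutions in Sobolev norms, and on the Fa\`a di Bruno expansion \eqref{equ:3-4} to handle the nonlinear reaction factor $\mathcal{R}$. The regularity hypothesis $a\in W^{k+6,p}\cap W^{k+5,\infty}$ matches the worst-case cost: three nested semigroups each contributing up to two derivatives gives six derivatives in a remainder term, while the $L^\infty$-type bounds feed the product estimates for $f$.

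For each subproblem I will derive an expansion of the form
\begin{equation*}
S_i(\tau)[b] = b + \tau\mathcal{L}_i(b) + \tfrac{\tau^2}{2}\mathcal{M}_i(b) + r_i(b,\tau),\qquad \mathcal{M}_i(b):=D\mathcal{L}_i(b)[\mathcal{L}_i(b)],
\end{equation*}
with an explicit integral remainder from the defining PDE/ODE and $\|r_i(b,\tau)\|_{k,p}\le C\tau^3$. Composing three such expansions and collecting powers of $\tau$ yields
\begin{equation*}
S^\xi(\tau)[a] = a + \tau\sum_i \mathcal{L}_i(a) + \tau^2 P_\xi(a) + R_\xi(a,\tau),\qquad P_\xi = \tfrac12\sum_i\mathcal{M}_i + \sum_{i<j}D\mathcal{L}_{\xi_j}[\mathcal{L}_{\xi_i}],
\end{equation*}
where $D\mathcal{L}_j$ is the Fr\'echet derivative, which equals $\mathcal{L}_j$ itself for $j=1,2$. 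The remainder $R_\xi$ is a finite sum of triple products of operators acting on intermediate states $S_{\xi_1}(\tau)[a]$ and $S_{\xi_2}(\tau)S_{\xi_1}(\tau)[a]$, whose $W^{k,p}$ norms are controlled uniformly in $\tau,\xi$ by Theorem \ref{theo:3-2}. Differentiating $\partial_t u=\sum_i \mathcal{L}_i(u)$ once more in time gives the parallel expansion $T(\tau)[a] = a+\tau\sum_i \mathcal{L}_i(a) + \tfrac{\tau^2}{2}\sum_{i,j}D\mathcal{L}_j(a)[\mathcal{L}_i(a)] + R_{\mathrm{ex}}(a,\tau)$. Subtracting, the zeroth and first-order terms cancel and the $\mathcal{M}_i$ terms cancel as well, so $S^\xi(\tau)[a]-T(\tau)[a]=O(\tau^2)$ in $W^{k,p}$, which proves the first estimate.

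For the bias, I take the expectation of $P_\xi(a)$ over uniform random permutations. For each ordered pair $(\alpha,\beta)$ of distinct indices in $\{1,2,3\}$, exactly three of the six permutations $\xi$ place $\alpha$ before $\beta$, so
\begin{equation*}
\mathbb{E}_\xi \sum_{i<j} D\mathcal{L}_{\xi_j}[\mathcal{L}_{\xi_i}] = \tfrac12\sum_{\alpha\neq\beta} D\mathcal{L}_\beta[\mathcal{L}_\alpha].
\end{equation*}
This is precisely the off-diagonal part of $\tfrac12\sum_{i,j}D\mathcal{L}_j[\mathcal{L}_i]$ appearing in the exact expansion; together with the diagonal $\mathcal{M}_i$ terms that already match, one obtains $\mathbb{E}_\xi P_\xi = P_{\mathrm{ex}}$, so the $\tau^2$ contributions annihilate. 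What remains is $\mathbb{E}_\xi R_\xi - R_{\mathrm{ex}}$, controlled as $O(\tau^3)$ by the same argument as for a single sample.

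The main obstacle is verifying the $O(\tau^3)$ remainder estimate in $W^{k,p}$ despite the unboundedness of $\mathcal{A}$ and $\mathcal{L}$ and the nonlinearity of $\mathcal{R}$. The worst remainder term has the shape $\tau^3\,\mathcal{L}_{i_1}\mathcal{L}_{i_2}\mathcal{L}_{i_3}$ applied to some intermediate state $a_\star$ obtained by applying one or two semigroups at time $\sigma\le\tau$. Theorem \ref{theo:3-2} supplies uniform bounds on $\|a_\star\|_{k+6,p}+\|a_\star\|_{k+5,\infty}$ in terms of the same norms of $a$; the triple product then involves at most six spatial derivatives (when all three $\mathcal{L}_{i_\ell}$ are the Laplacian) and, through the chain and product rule on $f(a_\star)$, products of lower-order derivatives that must be controlled in $L^\infty$. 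These are exactly the quantities furnished by the hypothesis on $a$, and assembling them yields both claims with constants depending only on $\|a\|_{k+6,p}+\|a\|_{k+5,\infty}$.
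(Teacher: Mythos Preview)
Your proposal is correct and follows essentially the same strategy as the paper: Taylor-expand each sub-semigroup to third order with integral remainder, compose, do the same for $T(\tau)$, and compare. The one noteworthy difference is presentational: the paper writes out all six permutations explicitly (collected in the appendix) and then averages them by hand, whereas you package the second-order term as $P_\xi=\tfrac12\sum_i\mathcal{M}_i+\sum_{i<j}D\mathcal{L}_{\xi_j}[\mathcal{L}_{\xi_i}]$ and use the symmetry argument $\mathbb{E}_\xi\sum_{i<j}D\mathcal{L}_{\xi_j}[\mathcal{L}_{\xi_i}]=\tfrac12\sum_{\alpha\neq\beta}D\mathcal{L}_\beta[\mathcal{L}_\alpha]$ to match the exact expansion directly. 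Your route is cleaner and generalizes immediately to $p$ operators; the paper's explicit enumeration has the compensating virtue of making every remainder term concrete. Two small points of care: Theorem~\ref{theo:3-2} as stated concerns the numerical sequence $u_n$, so for the intermediate states $S_{\xi_1}(\tau)[a]$ etc.\ you should really invoke Lemmas~\ref{lmm:adv}--\ref{lmm:react} directly (the paper is equally casual here); and among the $O(\tau^3)$ remainders you should not forget the second Fr\'echet derivative contributions such as $D^2\mathcal{R}(a)[\mathcal{L}_i a,\mathcal{L}_j a]=-6a\,(\mathcal{L}_i a)(\mathcal{L}_j a)$, though your final paragraph on product estimates covers these.
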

\begin{proof}
Consider the semigroup $S_1$ generated by $\cA$, one has by Taylor expansion that 
\begin{equation}\label{equ:tlexpandadv}
S_{1}(\tau)[a]=a+\tau \cA a+\frac{1}{2}\tau^2\cA^2a+R_{\cA},\quad
R_{\cA}[a]=\frac{1}{2}\int_{0}^{\tau}(\tau-s)^2\cA^3u(s)\,ds,
\end{equation}
with $u(s)=S_1(s)[a]$. 
By Proposition \ref{pro:exactsol}, $\|u(s)\|_{k+6,p}+\|u(s)\|_{k+5,\infty}$
bounded by a function of $\|a\|_{k+6,p}+\|a\|_{k+5,\infty}$.
Similarly, the heat semigroup is expanded as
\begin{equation}\label{eq:tlexpandheat}
S_{2}(\tau)[a]=a+\tau \cL a+\frac{1}{2}\tau^2\cL^2a+R_{\mathcal{L}},
\quad
R_{\mathcal{L}}[a]=\frac{1}{2}\int_{0}^{\tau}(\tau-s)^2\cL^3u(s)ds,
\end{equation}
with $u(s)=S_2(s)[a]$ here.

Consider the nonlinear semigroup $S_3(t)$. Letting $u(s)=S_3(s)[a]$, one has $\dot{u}=u-u^3$. Then, 
\begin{equation}
\frac{\partial^2u}{\partial t^2}=(1-3u^2)(u-u^3)=3u^5-4u^3+u,
\quad \frac{\partial^3u}{\partial t^3}=(15u^4-12u^2+1)(u-u^3).
\end{equation}
The Taylor expansion with integral remainder then gives
\begin{equation}\label{eq:tlexpandnonlinear}
S_{3}(\tau)a=a+\tau (a-a^3)+\frac{1}{2}\tau^2(1-3a^2)(a-a^3)+R_{\mathcal{R}}[a],
\end{equation}
where
\begin{equation}\label{equ:remainder-rrra}
R_{\mathcal{R}}[a]=\frac{1}{2}\int_{0}^{\tau}(\tau-s)^2h(u(s))ds,
\quad h(u)=(15u^4-12u^2+1)(u-u^3).
\end{equation}

At this point, we need to do the Taylor expansion of
$S^{\xi^n}[a]$. We take $S_1(\tau)S_3(\tau)S_2(\tau)[a]=S_{\mathcal{A}}(\tau)S_{\mathcal{R}}(\tau)S_{\mathcal{L}}(\tau)[a]$ as the example.
Using \eqref{equ:tlexpandadv}, one has
\begin{multline*}
S_{\mathcal{A}}(\tau)S_{\mathcal{R}}(\tau)S_{\mathcal{L}}(\tau)[a]
=S_{\mathcal{R}}(\tau)S_{\mathcal{L}}(\tau)[a]\\
+\tau \cA(S_{\mathcal{R}}(\tau)S_{\mathcal{L}}(\tau)[a])
+\frac{1}{2}\tau^2\cA^2(S_{\mathcal{R}}(\tau)S_{\mathcal{L}}(\tau)[a])
+R_{\cA}[S_{\mathcal{R}}(\tau)S_{\mathcal{L}}(\tau)[a]].
\end{multline*}
Applying Theorem \ref{theo:3-2}, $S_{\mathcal{R}}(\tau)S_{\mathcal{L}}(\tau)[a]$ is bounded in $W^{k+6,p}\cap W^{k+5,\infty}$ by a constant
depending on the norm of $a$. Hence, 
\[
\|R_{\cA}[S_{\mathcal{R}}(\tau)S_{\mathcal{L}}(\tau)[a]]\|_{k,p}
\le C\|S_{\mathcal{R}}(\tau)S_{\mathcal{L}}(\tau)[a]\|_{k+3,p}
\tau^3 \le C\tau^3.
\]
Moreover, using \eqref{eq:tlexpandnonlinear}, 
\begin{multline*}
S_{\mathcal{R}}(\tau)S_{\mathcal{L}}(\tau)[a]
=S_{\mathcal{L}}(\tau)[a]+\tau(S_{\mathcal{L}}(\tau)[a]-(S_{\mathcal{L}}(\tau)[a])^3)\\
+\frac{1}{2}\tau^2(1-3 (S_{\mathcal{L}}(\tau)[a])^2)
(S_{\mathcal{L}}(\tau)[a]-(S_{\mathcal{L}}(\tau)[a])^3)+R_{\cR}[S_{\mathcal{L}}(\tau)[a]].
\end{multline*}
It is easy to see that
\[
\|R_{\cR}[S_{\mathcal{L}}(\tau)[a]]\|_{k,p}\le C\tau^3.
\]
Moreover, for $\cA$ and $\cA^2$ terms, we do not need to expand
$S_{\mathcal{R}}(\tau)S_{\mathcal{L}}(\tau)[a]$ to third order, we only need the second order remainder and first order remainder. In particular, 
$\cA[(1-3 (S_{\mathcal{L}}(s)[a])^2)
(S_{\mathcal{L}}(s)[a]-(S_{\mathcal{L}}(s)[a])^3)]$
and $\cA^2(S_{\mathcal{L}}(s)[a]-(S_{\mathcal{L}}(s)[a])^3)$ are bounded uniformly in $W^{k,p}$. This then gives
\begin{multline*}
S_{\mathcal{A}}(\tau)S_{\mathcal{R}}(\tau)S_{\mathcal{L}}(\tau)[a]
=S_{\mathcal{L}}(\tau)[a]+\tau(S_{\mathcal{L}}(\tau)[a]-(S_{\mathcal{L}}(\tau)[a])^3)\\
+\tau \cA(S_{\cL}(\tau)[a])+\frac{1}{2}\tau^2(1-3 (S_{\mathcal{L}}(\tau)[a])^2)
(S_{\mathcal{L}}(\tau)[a]-(S_{\mathcal{L}}(\tau)[a])^3)\\
+\tau^2\cA(S_{\mathcal{L}}(\tau)[a]-(S_{\mathcal{L}}(\tau)[a])^3)
+\frac{1}{2}\tau^2\cA^2(S_{\cL}(\tau)[a])+R_1,
\end{multline*}
with
\[
\|R_1\|_{k,p}\le C\tau^3.
\]
We then apply \eqref{eq:tlexpandheat}. Similarly, we do the Taylor expansion of $S_{\cL}[a]=a+\int_0^{\tau}\cL u(s)\,ds$ only for those
in $O(\tau^2)$ and the one with second order remainder for those in $O(\tau)$. Then, 
\begin{multline}
S_{\mathcal{A}}(\tau)S_{\mathcal{R}}(\tau)S_{\mathcal{L}}(\tau)[a]
=a+\tau(\cL(a)+a-a^3+\cA(a))+\\
\frac{1}{2}\tau^2[\cL^2(a)+2\cA\cL(a)+2(1-3a^2)\cL(a)+(1-3 a^2)
(a-a^3)+\cA^2(a)+2\cA(a-a^3)]+R_2,
\end{multline}
with $\Vert R_2\Vert_{k,p}\leq C\tau^3$.

Other permutations can be treated similarly. The concrete formulas are listed in  Appendix \ref{sec:appendix}.
Note that we only need $\|a\|_{k+6,p}+\|a\|_{k+5,\infty}$. For example, in the expansion of $S_{2}(\tau)S_1(\tau)$, one has $\cL^3S_1(\tau)$ in the third order remainder term. Since  $S_1(\tau)[a]$ has no worse regularity than $a$, one needs only $6$ more derivatives (one does not need expand $S_1$ to get  $\tau^m\cL^3\cA^m$ and require $m+6$ more derivatives).  By these formulas, one has
\begin{multline}\label{equ:rec-s-tau-a}
S(\tau)[a]=\frac{1}{6}\sum_{\xi}S_{\xi_3}(\tau)S_{\xi_2}(\tau)S_{\xi_1}(\tau)[a]
=a+\tau(\cL a+\cA a+a-a^3)
+\frac{1}{2}\tau^2[(\cL+\cA)^2a\\
+(1-3a^2)(a-a^3)+(\cL+\cA)(a-a^3)+(1-3a^2)(\cL+\cA)a]+R_S,
\end{multline}
where $\xi$ ranges over the permutation of $\{1,2,3\}$ and 
\begin{equation}
\Vert R_{S}\Vert_{k,p}\leq C\tau^3,
\end{equation}
with $C$ depending on $a$ through the norm $\|a\|_{k+6,p}+\|a\|_{k+5,\infty}$.

Next, we consider the expansion of $T(\tau)$, which is a nonlinear semigroup involving the three operators together.
Using Duhamel's principle, one has
\begin{equation}\label{equ:t-tau-a-duha}
T(\tau)[a]=S_{\mathcal{L}+\mathcal{A}}(\tau)(a)+\int_{0}^{\tau}S_{\mathcal{L}+\mathcal{A}}(\tau-s)(u(s)-u^3(s))ds,
\end{equation}
where $u(s)$ is the solution to the PDE model with initial value $a$.
Then, one has
\begin{gather}\label{eq:uestimateaux1}
\sup_{s\le T}\left( \|u(s)\|_{k+6,p}+\|u(s)\|_{k+5,\infty}\right)<\infty,
\end{gather}
where the bound depends on $a$ through $\|a\|_{k+6,p}+\|a\|_{k+5,\infty}$.

For linear operator $\mathcal{L}+\mathcal{A}$, direct Taylor expansion gives
\begin{equation}\label{equ:sl+aa}
S_{\mathcal{L}+\mathcal{A}}(s_1)[w_0]=w_0+s_1(\cL+\cA)w_0+\frac{1}{2}s_1^2(\cL+\cA)^2w_0+R_{\mathcal{L}+\mathcal{A}}.
\end{equation}
The $W^{k,p}$ norm of the remainder can be bounded
by $Cs_1^3$ where $C$ is finite due to \eqref{eq:uestimateaux1}, if we take $w_0=a$ and $w_0=u(s)$. Moreover,
\begin{equation}\label{equ:tay-fu}
u(s)-u^3(s)=a-a^3+s(1-3a^2)(\cL a+\cA a+a-a^3)+R_{f},
\end{equation}
where $R_f=\int_{0}^{s}(s-s_1)G(s_1)ds_1$ with
\begin{multline*}
G(s_1)=-6u^2(s_1)(\mathcal{L}u(s_1)+\mathcal{A}u(s_1)+u(s_1)-u^3(s_1))^2\\
+(1-3u^2(s_1))(\mathcal{L}+\mathcal{A}+1-3u^2(s_1))(\mathcal{L}u(s_1)+\mathcal{A}u(s_1)+u(s_1)-u^3(s_1)).
\end{multline*}
Hence, eventually, one has
\begin{multline}\label{equ:rec-t-tau-a}
T(\tau)[a]=a+\tau(\cL a+\cA a+a-a^3)+\frac{1}{2}\tau^2\Big[(\cL+\cA)^2a\\
+(1-3a^2)(\cL a+\cA a+a-a^3)+(\cL+\cA)(a-a^3)\Big]+R_T.
\end{multline}
The remainder term is bounded by $C\tau^3$ again in $W^{k,p}$ norm and we omit the details. Here, $C$ depends on $a$ through the norm $\|a\|_{k+6,p}+\|a\|_{k+5,\infty}$.

With these expansions, the claims of the local truncation error hold.
\end{proof}

\begin{corollary}
Consider problem \eqref{equ:acdrift} and assume that $v(x)$ satisfies Assumption \ref{ass:vassumption}. Let $p\in[2,+\infty]$, $k\in\mathbb{N}$ and assume that $u_0\in W^{k+6,p}\cap W^{k+5,\infty}$. Let $u(t)$ be the exact solution to \eqref{equ:acdrift}, and $u_n$ be the numerical solution in definition \eqref{rela-rec} for the random splitting method with time step $\tau>0$.  Then, there exists $C>0$ such that
\begin{equation}
\sup_{\b{\xi}}\Vert u_n-u(t_n)\Vert_{k,p}\leq C\tau.
\end{equation}
\end{corollary}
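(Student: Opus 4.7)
The plan is to deploy a standard Lady Windermere's fan argument that combines the local truncation error from Lemma~\ref{lem:localtruncation} with the nonlinear stability of the exact semigroup from Theorem~\ref{theo:3-3}, using Theorem~\ref{theo:3-2} to keep all constants uniform in $n$ and $\boldsymbol{\xi}$.

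First I would write the telescoping identity
\begin{equation*}
u_n - u(t_n) = \sum_{j=0}^{n-1}\Bigl(T((n-1-j)\tau)\bigl[S^{\xi^j}(\tau)[u_j]\bigr] - T((n-j)\tau)[u_j]\Bigr),
\end{equation*}
where each term compares the exact flow run from time $t_{j+1}$ onward starting either from the one-step numerical value $S^{\xi^j}(\tau)[u_j]$ or from the one-step exact value $T(\tau)[u_j]$. Since $T$ is a nonlinear semigroup, I cannot pull $T$ out linearly, but Theorem~\ref{theo:3-3} supplies exactly the needed Lipschitz bound: for each $j$,
\begin{equation*}
\Bigl\|T((n-1-j)\tau)\bigl[S^{\xi^j}(\tau)[u_j]\bigr] - T((n-1-j)\tau)\bigl[T(\tau)[u_j]\bigr]\Bigr\|_{k,p} \le e^{C(n-1-j)\tau}\,\bigl\|S^{\xi^j}(\tau)[u_j] - T(\tau)[u_j]\bigr\|_{k,p},
\end{equation*}
with a constant $C$ determined by a uniform $W^{k,p}\cap W^{\max(k-1,0),\infty}$ bound on the two initial values, which is available from Proposition~\ref{pro:exactsol} and Theorem~\ref{theo:3-2}.

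Next I would invoke the local truncation estimate of Lemma~\ref{lem:localtruncation} to bound each $\|S^{\xi^j}(\tau)[u_j] - T(\tau)[u_j]\|_{k,p}$ by $C\tau^2$. The constant here depends on $\|u_j\|_{k+6,p}+\|u_j\|_{k+5,\infty}$. This is precisely where the hypothesis $u_0\in W^{k+6,p}\cap W^{k+5,\infty}$ enters: applying Theorem~\ref{theo:3-2} at Sobolev levels $k+6$ and $k+5$ gives constants $M_{k+6,p}$ and $M_{k+5,\infty}$ that are independent of $n$, $\tau$ and $\boldsymbol{\xi}$, so the local truncation constant $C$ in Lemma~\ref{lem:localtruncation} can be replaced by a single $\boldsymbol{\xi}$-independent constant valid for every $j$.

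Combining these two ingredients and using $(n-1-j)\tau \le T$ yields
\begin{equation*}
\|u_n - u(t_n)\|_{k,p} \le e^{CT} \sum_{j=0}^{n-1} C\tau^2 \le e^{CT}\,CT\,\tau,
\end{equation*}
since $n\tau \le T$. Taking the supremum over $\boldsymbol{\xi}$ (all bounds above are $\boldsymbol{\xi}$-independent) gives the claim. The only delicate point is verifying that the constant in Lemma~\ref{lem:localtruncation} applied with $a = u_j$ can be made uniform in $j$ and $\boldsymbol{\xi}$; this is the main obstacle and is resolved by a clean bookkeeping of regularity indices — one loses six derivatives in the local error, so the propagation of the $W^{k+6,p}\cap W^{k+5,\infty}$ norm via Theorem~\ref{theo:3-2} (rather than a weaker norm) is essential.
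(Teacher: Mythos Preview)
Your proposal is correct and takes essentially the same approach as the paper: both combine the uniform Sobolev bounds from Theorem~\ref{theo:3-2}, the local truncation estimate from Lemma~\ref{lem:localtruncation}, and the stability of the exact flow from Theorem~\ref{theo:3-3}. The only difference is presentational---you use the telescoping Lady Windermere's fan sum, whereas the paper writes the equivalent one-step recursion $\Vert u_n-u(t_n)\Vert_{k,p}\le C\tau^2+e^{C\tau}\Vert u_{n-1}-u(t_{n-1})\Vert_{k,p}$ and iterates; unrolling the recursion yields exactly your sum.
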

\begin{proof}
According to Theorem \ref{theo:3-2},
\begin{gather}\label{auxeq:auxnorms}
\sup_{\b{\xi}}\sup_{n: n\tau\le T}\left( \|u_n\|_{k+6,p}+\|u_{n}\|_{k+5,\infty}\right)<\infty.
\end{gather}
By Theorem \ref{theo:3-3}, one then has
\begin{gather*}
\left\Vert T(\tau)[u_{n-1}]-T(\tau)[u(t_{n-1})] \right\Vert_{k,p}\leq e^{C\tau}\Vert u_{n-1}-u(t_{n-1})\Vert_{k,p},
\end{gather*}
where $C$ depends on the $\|u_{n}\|_{\max(k-1, 0),\infty}$ norms, which are uniformly bounded. 

Now, we take $a=u_{n-1}$ which is bounded by \eqref{auxeq:auxnorms}. Applying Lemma \ref{lem:localtruncation}, there exists $C>0$, such that
\begin{equation*}
\Vert S^{\xi^n}(\tau)u_{n-1}-T(\tau)u_{n-1}\Vert_{k,p}\leq C\tau^2.
\end{equation*}
Then,
\begin{equation*}
\begin{split}
\Vert u_n-u(t_n)\Vert_{k,p}\leq & \Vert S^{\xi^n}(\tau)[u_{n-1}]-T(\tau)[u_{n-1}]\Vert_{k,p}+\Vert T(\tau)[u_{n-1}]-T(\tau)[u(t_{n-1}])\Vert_{k,p}\\
\leq  &C\tau^2+e^{C\tau}\Vert u_{n-1}-u(t_{n-1})\Vert_{k,p}.
\end{split}
\end{equation*}
This then gives the desired result.
\end{proof}

\subsection{Convergence analysis: expectation of the error}

In this subsection, we establish the estimates of the expected single-run error, defined by
\begin{gather}
\mathcal{E}_{k,p}:=\E_{\b{\xi}}\|u_n-u(\cdot, t_n)\|_{k,p}.
\end{gather}
In particular, we have the following estimate about the single run error.
\begin{theorem}\label{theo:4-2}
Let $T>0$. Consider problem \eqref{equ:acdrift} and assume that $v(x)$ satisfies Assumption \ref{ass:vassumption}. Let $p\in[2,+\infty)$, $k\in\mathbb{N}$ and assume that $u_0\in W^{k+6,p}\cap W^{k+5,\infty}$. Let $u(t)$ be the exact solution to \eqref{equ:acdrift}, and $u_n$ be the numerical solution in \eqref{rela-rec} for the random splitting method with time step $\tau\le T$, with the same initial value.  Then, there exists $C>0$ such that for any $t_n\le T$, one has
\begin{equation}
\E_{\b{\xi}}\|u_n-u(\cdot, t_n)\|_{k,p} \leq \left(\E_{\b{\xi}}\Vert u_{n}-u(\cdot,t_n)\Vert_{k,p}^{p}\right)^{1/p} \le C \tau^{3/2}.
\end{equation}
\end{theorem}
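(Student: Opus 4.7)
By Jensen's inequality, $\E_{\b{\xi}}\|u_n-u(t_n)\|_{k,p} \le (\E_{\b{\xi}}\|u_n-u(t_n)\|_{k,p}^p)^{1/p}$, so it suffices to bound the $p$th moment $E_n := \E_{\b{\xi}}\|e_n\|_{k,p}^p$ with $e_n := u_n - u(t_n)$ by $C\tau^{3p/2}$. Let $\mathcal{F}_{n-1} := \sigma(\xi^0,\ldots,\xi^{n-2})$, so $u_{n-1}$ is $\mathcal{F}_{n-1}$-measurable while $\xi^{n-1}$ is independent of it. The one-step local error $\delta_n := S^{\xi^{n-1}}(\tau)[u_{n-1}] - T(\tau)[u_{n-1}]$ decomposes as $\delta_n = m_n + b_n$, with $b_n := \E[\delta_n\mid\mathcal{F}_{n-1}] = S(\tau)[u_{n-1}] - T(\tau)[u_{n-1}]$ and $m_n$ centered conditional on $\mathcal{F}_{n-1}$. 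By Lemma 4.1 applied with exponents $p$ and $\infty$ (both justified by the uniform Sobolev bounds from Theorem 3.2), one has pathwise $\|b_n\|_{k,p}\le C\tau^3$ and $\|m_n\|_{k,p}+\|m_n\|_{k,\infty}\le C\tau^2$. Writing $e_n = m_n + f_n$ with the $\mathcal{F}_{n-1}$-measurable piece $f_n := b_n + T(\tau)[u_{n-1}] - T(\tau)[u(t_{n-1})]$, Theorem 3.3 gives $\|f_n\|_{k,p} \le C\tau^3 + e^{C\tau}\|e_{n-1}\|_{k,p}$.

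The core of the argument is the pointwise inequality, valid for $p \ge 2$ and $a,b\in\mathbb{R}$,
\begin{equation*}
|a+b|^p \le |a|^p + p\,|a|^{p-2}a\,b + C_p\bigl(|a|^{p-2}b^2 + |b|^p\bigr),
\end{equation*}
applied with $a = D^\alpha f_n(x)$ and $b = D^\alpha m_n(x)$. Integrating in $x$, summing over $|\alpha|\le k$, and taking conditional expectation given $\mathcal{F}_{n-1}$, the first-order cross term vanishes by $\E[D^\alpha m_n\mid\mathcal{F}_{n-1}]=0$. Controlling the quadratic and $p$th-order remainders via $\|m_n\|_{k,\infty}\le C\tau^2$, and using H\"older to replace $\int|D^\alpha f_n|^{p-2}$ by $\|D^\alpha f_n\|_p^{p-2}$, one obtains
\begin{equation*}
\E\bigl[\|e_n\|_{k,p}^p \mid \mathcal{F}_{n-1}\bigr] \le \|f_n\|_{k,p}^p + C\tau^4\|f_n\|_{k,p}^{p-2} + C\tau^{2p}.
\end{equation*}
Plugging the stability bound for $\|f_n\|_{k,p}^p$ via the Young-type estimate $(e^{C\tau}x+C\tau^3)^p \le e^{C'\tau}x^p + C\tau^{2p+1}$ and taking expectation produces a recursion of the shape $E_n \le e^{C\tau}E_{n-1} + C\tau^{3p/2+1}\bigl(1 + E_{n-1}^{(p-2)/p}\bigr)$. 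Since $(p-2)/p<1$, this closes inductively under the target $E_n \le K\tau^{3p/2}$, and a discrete Gr\"onwall finishes with $E_n \le CT\tau^{3p/2}$, as required.

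The main obstacle is the self-consistent closure of the recursion: the nonlinear term $\tau^4\|f_n\|_{k,p}^{p-2}$ must be absorbed into the very bound one is proving, which succeeds precisely because the conditional centering of $m_n$ buys one additional power of $\tau$ over the naive triangle inequality. The assumption $p<\infty$ is essential: the pointwise $|a+b|^p$ expansion, and hence the orthogonal cancellation of the centered $m_n$, are unavailable in $L^\infty$, leaving only the first-order $O(\tau)$ bound from the preceding corollary.
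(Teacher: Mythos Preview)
Your argument is correct and follows essentially the same route as the paper. The paper decomposes $e_{n+1}=a+b$ with $a=D^{\alpha}(T(\tau)[u_n]-T(\tau)[u(t_n)])$ and $b=D^{\alpha}(S^{\xi^n}(\tau)[u_n]-T(\tau)[u_n])$, expands $|a+b|^p$ with integral remainder, and uses that $\E_{\xi^n}b$ is $O(\tau^3)$ (Lemma~4.1) to gain the extra power of $\tau$ in the cross term; you achieve the same cancellation by first splitting $b$ into its centered part $m_n$ and its bias $b_n$, absorbing $b_n$ into the $\mathcal{F}_{n-1}$-measurable piece, so the cross term vanishes exactly. Both versions rely on the $W^{k,\infty}$ case of Lemma~4.1 to control the quadratic remainder pointwise, and the paper then linearizes via Young's inequality to obtain a clean recursion $A_{n+1}\le e^{pC\tau}(1+\bar C\tau)A_n+\bar C\tau^{3p/2+1}$, whereas you carry the nonlinear term $\tau^4 E_{n-1}^{(p-2)/p}$ and close it by induction under the ansatz $E_{n-1}\le K\tau^{3p/2}$; either device works. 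One small slip: in your displayed recursion the nonlinear term should read $C\tau^{4}E_{n-1}^{(p-2)/p}$ rather than $C\tau^{3p/2+1}E_{n-1}^{(p-2)/p}$---the exponent $3p/2+1$ appears only after substituting the induction hypothesis.
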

\begin{proof}
By Proposition \ref{pro:exactsol} and Theorem \ref{theo:3-2},
\begin{gather}\label{eq:aux2}
\sup_{s\le T}\left( \|u(s)\|_{k+6,p}+\|u(s)\|_{k+5,\infty}\right)<\infty,
\quad \sup_{\b{\xi}}\sup_{n: n\tau\le T}\left( \|u_n\|_{k+6,p}+\|u_{n}\|_{k+5,\infty}\right)<\infty.
\end{gather}

First consider $p\in [2,\infty)$. Define
\begin{gather}
A_n:=\E_{\b{\xi}}\|u_n-u(\cdot, t_n)\|_{k,p}^p
=\sum_{|\alpha|\le k}\E_{\b{\xi}}\|D^{\alpha}u_n-D^{\alpha}u(\cdot, t_n)\|_p^p.
\end{gather}
Using the fact
\begin{multline}
D^{\alpha}u_{n+1}-D^{\alpha}u(\cdot, t_{n+1})
=D^{\alpha}(S^{\xi^n}(\tau)[u_n]-T(\tau)[u(\cdot, t_n)])\\
=D^{\alpha}(S^{\xi^n}(\tau)[u_n]-T(\tau)[u_n])
+D^{\alpha}(T(\tau)[u_n]-T(\tau)[u(\cdot, t_n)]).
\end{multline}
Denote
\[
a:=D^{\alpha}(T(\tau)[u_n]-T(\tau)[u(\cdot, t_n)]), 
\quad b:=D^{\alpha}(S^{\xi^n}(\tau)[u_n]-T(\tau)[u_n]).
\]
We consider
\[
|a+ b|^p=|a|^p+p|a|^{p-2}a b+\int_0^1(1-s)p(p-1)|a+s  b|^{p-2} b^2\,ds.
\]
Taking expectation on $\xi^n$ first and applying Lemma \ref{lem:localtruncation}, one has
\[
\E_{\xi^n}|a+ b|^p
\le |a|^p+p|a|^{p-2}a\E_{\xi^n}b+Cp(p-1)2^{p-1}(|a|^{p-2}\tau^4+C^p \tau^{2p}).
\]
With a constant $\bar{C}$ {\it that depends on $p$}, one has
\[
\E_{\xi^n}|a+ b|^p
\le |a|^p+\bar{C} |a|^{p-1}\tau^3+\bar{C}(\tau |a|^{p}+\tau^{\frac{3p}{2}+1}+ \tau^{2p})
\]
by Young's inequality. Moreover, by the stability of the model in Theorem \ref{theo:3-3},
\[
\|a\|_p^p\le e^{pC\tau}\|D^{\alpha}u_n-D^{\alpha}u(\cdot, t_n)\|_p^p.
\]
Integrating on $\Omega$, taking the full expectation and summing over $\alpha$, with the fact $\tau\le T$, one has
\[
A_{n+1}\le e^{pC\tau}(1+\bar{C}\tau)A_n+\bar{C}\tau^{\frac{3p}{2}+1}.
\]
This then gives
\[
A_n\le \bar{C}\tau^{\frac{3p}{2}},
\]
giving the desired result.
\end{proof}

\begin{remark}
Note that the proof above cannot be generalized to $p=\infty$. The reason is that it is hard to close the estimate of $\int_0^1(1-s)p(p-1)|a+s  b|^{p-2} b^2\,ds$ that does not blow up
as $p\to\infty$.
\end{remark}

\subsection{Convergence analysis: bias}

In this subsection, we establish estimate of the bias. Recall in \eqref{def:v-n}, $v_n$ is the expectation of the numerical solution $u_n$.
Then, the bias is $v_n-u(\cdot, t_n)$. We will establish the bias under $W^{k,p}$ norms defined by
\begin{gather}
\mathcal{B}_{k,p}:=\|v_n-u(\cdot, t_n)\|_{k,p}.
\end{gather}

\begin{theorem}\label{theo:4-3}
Let $T>0$. Consider problem \eqref{equ:acdrift} and assume that $v(x)$ satisfies Assumption \ref{ass:vassumption}. Let $p\in[2,+\infty]$, $k\in\mathbb{N}$ and assume that $u_0\in W^{k+6,p}\cap W^{k+5,\infty}$. Let $u(t)$ be the exact solution to \eqref{equ:acdrift}, and $u_n$ be the numerical solution in \eqref{rela-rec} for the random splitting method with time step $\tau\le T$, with the same initial value. Recall $v_n$ defined in \eqref{def:v-n}. Then, there exists $C>0$ such that for any $t_n\le T$, one has
\begin{equation}
\Vert v_{n}-u(\cdot, t_n)\Vert_{k,p}\leq C\tau^2.
\end{equation}
\end{theorem}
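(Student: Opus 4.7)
The plan is to derive a one-step recursion for $\mathcal{B}_{k,p}(n):=\|v_n-u(\cdot,t_n)\|_{k,p}$ and close it via discrete Gr\"onwall. Using the tower property together with the identity $\mathbb{E}_{\xi^n}[S^{\xi^n}(\tau)u_n\mid u_n]=S(\tau)u_n$ from \eqref{equ:ex-random-split-term}, one has $v_{n+1}=\mathbb{E}[S(\tau)u_n]$ and
\begin{equation*}
v_{n+1}-u(t_{n+1})=\mathbb{E}\bigl[S(\tau)u_n-T(\tau)u_n\bigr]+\mathbb{E}\bigl[T(\tau)u_n-T(\tau)u(t_n)\bigr].
\end{equation*}
The first bracket is exactly the object controlled by the second bound of Lemma \ref{lem:localtruncation}; combined with the uniform-in-$n,\boldsymbol{\xi}$ Sobolev bounds of Theorem \ref{theo:3-2} on $u_n$ in $W^{k+6,p}\cap W^{k+5,\infty}$, it is $O(\tau^3)$.

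For the second bracket, which is where the nonlinearity of $T(\tau)$ matters, I would avoid formal Fr\'echet derivatives and instead analyze the PDE satisfied by $r(t):=T(t)[u_n]-T(t)[u(t_n)]$. Setting $\tilde w(t):=T(t)[u(t_n)]$ and expanding the cubic nonlinearity via $f(\tilde w+r)-f(\tilde w)=f'(\tilde w)r+3\tilde w r^2+r^3$ gives
\begin{equation*}
\partial_t r=(\mathcal{L}+\mathcal{A})r-f'(\tilde w)r-3\tilde w r^2-r^3,\qquad r(0)=u_n-u(t_n)=:e_n,
\end{equation*}
where $f'(\tilde w)=3\tilde w^2-1$ is bounded in every Sobolev norm by Proposition \ref{pro:exactsol}. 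Let $U(\tau,s)$ denote the solution operator of the linearised equation $\partial_tz=(\mathcal{L}+\mathcal{A}-f'(\tilde w))z$; the same energy argument used in Theorem \ref{theo:3-3} gives $\|U(\tau,s)h\|_{k,p}\le e^{C\tau}\|h\|_{k,p}$ with $C$ independent of $p$. Taking expectation and applying Duhamel,
\begin{equation*}
\mathbb{E}\,r(\tau)=U(\tau,0)(v_n-u(t_n))-\int_0^\tau U(\tau,s)\bigl[3\tilde w(s)\mathbb{E}[r^2(s)]+\mathbb{E}[r^3(s)]\bigr]\,ds.
\end{equation*}
The leading term contributes $e^{C\tau}\mathcal{B}_{k,p}(n)$. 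For the nonlinear sources, Theorem \ref{theo:3-3} gives $\|r(s)\|_{k,q}\le e^{Cs}\|e_n\|_{k,q}$ for every $q\in[2,\infty]$, while the Corollary following Lemma \ref{lem:localtruncation} supplies the pathwise bound $\|e_n\|_{k,q}\le C\tau$ for every such $q$. Combining these with the Moser-type product estimate $\|gh\|_{k,p}\le C(\|g\|_\infty\|h\|_{k,p}+\|g\|_{k,p}\|h\|_\infty)$ on $\mathbb{T}^d$ yields pathwise (hence in expectation) $\|r^2(s)\|_{k,p}\le C\tau^2$ and $\|r^3(s)\|_{k,p}\le C\tau^3$, so the Duhamel integral is $O(\tau^3)$.

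Assembling the pieces produces the recursion
\begin{equation*}
\mathcal{B}_{k,p}(n+1)\le e^{C\tau}\mathcal{B}_{k,p}(n)+C\tau^3,
\end{equation*}
and discrete Gr\"onwall with $\mathcal{B}_{k,p}(0)=0$ and $n\tau\le T$ delivers the desired $\mathcal{B}_{k,p}(n)\le C\tau^2$. The main obstacle is the quadratic piece in the second bracket: the gain from an $O(\tau)$ pathwise error to an $O(\tau^2)$ expected bias rests on the observation that in Duhamel only $\mathbb{E}[r(0)]=v_n-u(t_n)$ appears linearly, while the genuinely random fluctuations of $e_n$ enter only through $\mathbb{E}[r^2]$ and $\mathbb{E}[r^3]$, both of which are cheap thanks to the pathwise estimate. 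A useful by-product of relying on the pathwise bound rather than on moments is that the argument never invokes Theorem \ref{theo:4-2}, so the proof is uniform across $p\in[2,\infty]$ and in particular covers the $p=\infty$ endpoint excluded from the single-run error bound.
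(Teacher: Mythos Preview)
Your argument is correct and arrives at the same recursion $\mathcal{B}_{k,p}(n+1)\le e^{C\tau}\mathcal{B}_{k,p}(n)+C\tau^3$ as the paper, but through a different decomposition. The paper writes
\[
v_{n+1}-u(t_{n+1})=\bigl(\mathbb{E}_{\boldsymbol{\xi}}S(\tau)[u_n]-S(\tau)[v_n]\bigr)+\bigl(S(\tau)[v_n]-T(\tau)[u(t_n)]\bigr),
\]
so the nonlinear ``Jensen gap'' sits in the \emph{first} bracket, for the averaged splitting operator $S(\tau)$; it is handled by inserting the explicit second-order expansion \eqref{equ:rec-s-tau-a} of $S(\tau)$, Taylor-expanding $f(u_n)-f(v_n)$ about $v_n$, and observing that the linear piece vanishes in expectation while the quadratic piece is $O(\tau^2)$ pathwise by the Corollary. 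The second bracket is then purely deterministic and yields $e^{C\tau}\mathcal{B}_{k,p}(n)+C\tau^3$ directly from Lemma~\ref{lem:localtruncation} and Theorem~\ref{theo:3-3}. You instead push the Jensen gap into the \emph{second} bracket, for the exact flow $T(\tau)$, and resolve it by Duhamel linearisation about the deterministic trajectory $\tilde w=T(\cdot)[u(t_n)]$: the linear propagator $U(\tau,0)$ produces $e^{C\tau}\mathcal{B}_{k,p}(n)$, while $\mathbb{E}[r^2]$ and $\mathbb{E}[r^3]$ are bounded using the same pathwise $O(\tau)$ estimate. The underlying mechanism---fluctuations enter only quadratically and the pathwise first-order bound makes them $O(\tau^2)$---is identical. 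Your route treats Lemma~\ref{lem:localtruncation} as a black box and never reopens the expansion of $S(\tau)$, at the cost of introducing and estimating the linearised propagator $U$; the paper's route reuses the expansion it already computed and needs only the stability Theorem~\ref{theo:3-3} on the deterministic pair $(v_n,u(t_n))$. Both share the same minor regularity bookkeeping (an $L^\infty$-type pathwise bound on $e_n$ is implicitly needed to control products like $\|r^2\|_{k,p}$), and both extend to $p=\infty$ for the same reason.
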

\begin{proof}
Note that 
\[
u_{n+1}=S^{\xi_n}(\tau)[u_n],\quad v_{n+1}=\E_{\b{\xi}}[\E[S^{\xi_n}(\tau)[u_n]| \xi_1,\cdots,\xi_{n-1}]]=\E_{\b{\xi}}S(\tau)[u_n],
\]
where $S$ is defined in \eqref{equ:ex-random-split-term}. However, since $S$ is nonlinear, we need to decompose
\[
\begin{split}
v_{n+1}-u(\cdot, t_{n+1})&=\E_{\b{\xi}}S(\tau)[u_n]-T(\tau)[u(\cdot, t_n)]\\
&=(\E_{\b{\xi}}S(\tau)[u_n]-S(\tau)[v_n])+(S(\tau)[v_n]-T(\tau)[u(\cdot, t_n)]).
\end{split}
\]
As in the proof of Theorem \ref{theo:4-2},
\begin{gather}\label{eq:aux3}
\sup_{s\le T}\left( \|u(s)\|_{k+6,p}+\|u(s)\|_{k+5,\infty}\right)<\infty,
\quad \sup_{\b{\xi}}\sup_{n: n\tau\le T}\left( \|u_n\|_{k+6,p}+\|u_{n}\|_{k+5,\infty}\right)<\infty.
\end{gather}
Consequently, $v_n$ has the same control by Jensen's inequality. Then, one has by the local truncation error in Lemma \ref{lem:localtruncation} and  by  the stability of the model in Theorem \ref{theo:3-3} that
\begin{gather*}
\begin{split}
\|S(\tau)[v_n]-T(\tau)[u(\cdot, t_n)]\|_{k,p}
&\le \|S(\tau)[v_n]-T(\tau)[v_n]\|_{k,p}+\|T(\tau)[v_n]-T(\tau)[u(\cdot, t_n)]\|_{k,p}\\
&\le C\tau^3+e^{C\tau}\|v_n-u(\cdot, t_n)\|_{k,p}.
\end{split}
\end{gather*}
Here, the constant $C$ does not blow up as $p\to\infty$.

Next, we consider $\E_{\b{\xi}}S(\tau)[u_n]-S(\tau)[v_n]$. By the semigroup expansion \eqref{equ:rec-s-tau-a}, we get:
\begin{multline}
\mathbb{E}_{\b{\xi}}(S(\tau)[u_{n}])-S(\tau)[\mathbb{E}_{\b{\xi}}u_{n}]
=-\tau [\E_{\b{\xi}}(u_{n}^3)-(\mathbb{E}_{\b{\xi}}u_{n})^3]-2\tau^2 [\mathbb{E}_{\b{\xi}}(u_{n}^3)-(\mathbb{E}_{\b{\xi}}u_{n})^3]\\
+\frac{3}{2}\tau^2 [\mathbb{E}_{\b{\xi}}(u_{n}^5)-(\mathbb{E}_{\b{\xi}}u_{n})^5]
-\frac{1}{2}\tau^2(\cL+\cA)[\mathbb{E}_{\b{\xi}}(u_{n}^3)-(\mathbb{E}_{\b{\xi}}u_{n})^3]\\
-\frac{3}{2}\tau^2[\mathbb{E}_{\b{\xi}}(u_{n}^2(\cL+\cA)u_{n})-(\mathbb{E}_{\b{\xi}}u_{n})^2(\cL+\cA)(\mathbb{E}_{\b{\xi}}u_{n})]+\bar{R},
\end{multline}
where $\bar{R}$ is $O(\tau^3)$ in the sense of $W^{k,p}$ norm.

As one can see, the differences are various bias brought by nonlinearity.
In general, 
\begin{multline*}
f(u_n)-f(\E_{\b{\xi}}u_n)
=f'(\E_{\b{\xi}}u_n)(u_n-\E_{\b{\xi}}u_n)\\
+\int_0^1(1-s)f''(\E_{\b{\xi}}u_n+s(u_n-\E_{\b{\xi}}u_n))(u_n-\E_{\b{\xi}}u_n)^2\,ds.
\end{multline*}
Taking expectation, the first term vanishes. 
By \eqref{eq:aux3} and Lemma \ref{lem:localtruncation}, the second term 
should be $C\tau^2$ in $W^{k,p}$. Hence, one has
\[
\|\mathbb{E}_{\b{\xi}}(S(\tau)[u_{n}])-S(\tau)[\mathbb{E}_{\b{\xi}}u_{n}]\|_{k,p}
\le C\tau^3.
\]

Hence,
\[
\|v_{n+1}-u(\cdot, t_{n+1})\|_{k,p}
\le e^{C\tau}\|v_{n}-u(\cdot, t_{n})\|_{k,p}+C\tau^3.
\]
Note that the constants $C$ in all the estimates here can be generalized to $p=\infty$. Hence, the result above is also valid for $p=\infty$.
The result then follows. 
\end{proof}

\subsection{Discussions}

We now have proved the convergence of the random splitting method
rigorously for the model \eqref{equ:acdrift}, which is a practical model for phase separation in a background flow that involves unbounded and nonlinear operators. The key is to get the stability of the model in the same Sobolev space and the propagation of the initial regularity 
so that the local truncation errors involving the unbounded operators
can be estimated. The nonlinearity brings extra bias that needs to be estimated.

Here, we provide an intuitive understanding of the order of convergence. 
In fact, the random splitting method converges through a law of large number mechanism in time, due to the averaging effect, as explained for the random batch method in \cite{jin2020random,jin2022random,jin2021convergence,jin2022}.
For a model consisting of $p$ operators, with the random splitting strategy used in this paper, the number of computational intervals is $N=O(\tau^{-1})$. 
Among this number of random permutations, a complete set of random permutations consisting of each of the $p!$ ones would be considered as an unbiased evolution. With the natural central limit theorem, the fluctuation that leads to the biased evolution, whose number is of order
$O(\sqrt{N})=O(\tau^{-1/2})$. 
As mentioned in Lemma \ref{lem:localtruncation}, each biased step contributes local error of order $O(\tau^2)$. Hence, the total error would be of order
\[
O(\tau^{-1/2})*O(\tau^2)=O(\tau^{3/2}).
\]
In the random splitting here, the random permutations are independent within each step. If one does the permutation with correlation such that each permutation appears exactly once during the consecutive $p!$ steps, then the error is expected to be lower. However, due to the correlation, the analysis would be more involved. The limit behavior as $p$ goes large is also unclear. This could be an interesting topic for future study.

Lastly, we remark that one may use both the single-run result or the averaged result to approximate the true solution.
A single run typically gives an $O(\tau^{3/2})$ error.
Although the averaged solution has an error of order $O(\tau^2)$, to get a good estimate of the expectation, one needs the number of solutions $M$ large so that
the fluctuation is of the same order. In particular, we need
$O(\mathrm{Variance}/M)=O(\tau^4)$. The variance could be estimated
by the square of the single run error, which is $O(\tau^3)$. Hence, to get the desired accuracy, one needs $M=O(\tau^{-1})$, which seems more expensive
than doing the symmetric Trotter splitting.
This is in contrast to the recently proposed random ordinate method for radiative transfer equations \cite{li2024random}, where the bias is the square of the single-run error, for which case doing independent samples and taking the average would not enlarge the complexity while more samples would reduce the low resolution issue in that problem.

\section{Numerical Experiments}\label{sec:numerical}

In this section, we validate our theoretic results in Theorem \ref{theo:4-2} and Theorem \ref{theo:4-3} by an illustrating example.

The reference solution will be computed by an exponential Runge-Kutta method to overcome the stiffness in diffusion. The spatial discretization will be performed by the Fourier spectral method. 
In particular, on the Fourier side, one could find that the Fourier transform of the exact solution satisfies
\begin{multline}\label{equ:exact}
\hat{u}(t,k)=e^{-\frac{4\pi^2}{L^2}|k|^2\nu t}\hat{u}(0,k)\\
-e^{-\frac{4\pi^2}{L^2}|k|^2\nu t}\int_{0}^{t}e^{\frac{4\pi^2}{L^2}|k|^2\nu s}\left[\widehat{f(u)}(s,k)+i\pi\frac{2k}{L}\cdot \widehat{(v u)}(s,k)\right]\,ds,
\end{multline}
where $\hat{\phi}$ means the Fourier transform (Fourier series) of the periodic function $\phi$ and $k$ is the frequency which is a $d$-dimensional integer. Hence, the following midpoint formula could be used for the reference solution (by setting $t=\tau$ and using $u(\tau/2)$ for the integrand to approximate, while $u(\tau/2)$ itself is approximated by an Euler type approximation).
\begin{equation}\label{equ:mid-point}
\begin{split}
&\hat{u}_{n+1/2}\left(k\right)=e^{-\frac{2\pi^2}{L^2}|k|^2\nu \tau}\hat{u}_n(k)-\frac{L^2}{4\pi^2|k|^2\nu}\left(1-\exp(-\frac{2\pi^2}{L^2}|k|^2\nu\tau)\right)g(u_n, k),\\
&\hat{u}_{n+1}(k)= e^{-\frac{4\pi^2}{L^2}|k|^2\nu \tau}\hat{u}_n(k)-\frac{L^2}{4\pi^2|k|^2\nu}\left(1-\exp(-\frac{4\pi^2}{L^2}|k|^2\nu\tau)\right)g(u_{n+1/2},k),
\end{split}
\end{equation}
where
\begin{equation}
g(u, k)=\widehat{f(u)}(k)+i\pi\frac{2 k}{L}\cdot\widehat{(v u)}(k).
\end{equation}
This is clearly a second order scheme. Here, the case $k=0$ needs to be treated separately as the $k\to 0$ limit. For spatial discretization, we use the discrete Fourier transform and $k$ then ranges from $-N/2+1$ to $N/2$, which is then implemented by the fast Fourier transform.

For the implementation of Algorithm \ref{alg:randomsplit}, we again take the Fourier spectral method for spatial discretization, which enjoys the spectral accuracy so that we may focus on the order of time discretization. 
For example, the heat semigroup can be solved exactly in time due to the observation
\begin{equation*}
\frac{d}{dt}\hat{u}(t,k)=-\frac{4\nu \pi^2}{L^2}|k|^2\hat{u}(t,k),
\end{equation*}
\begin{equation*}
\hat{u}(\tau,k)=\hat{u}(0,k)\exp\left(-\frac{4\nu \pi^2}{L^2}|k|^2\tau\right),
\end{equation*}
so that there is no stiffness. The advection term is similarly expressed as 
\begin{equation*}
\frac{d}{dt}\hat{u}(k)=-i\frac{2\pi}{L}k\cdot\widehat{(v u)}(k),
\end{equation*}
which is solved by an explicit four-stage Runge-Kutta method (RK4-method).
The nonlinear semigroup is solved by the ordinary differential equation, and is given by \eqref{eq:nonlinearsemigroup}.

In this experiment, we take $d=2$ and the background is the following shear flow
\begin{gather}
v(x,y)=\begin{pmatrix}-0.75\sin y \\ 0\end{pmatrix}.
\end{gather}
The parameters in problem \eqref{equ:acdrift} are taken to be
\begin{equation}
L=2\pi, \nu=1,
\end{equation}
and initial value is given by
\begin{gather}
 u_0(x,y)=1+0.5\sin x+\exp(0.7\sin y).
\end{gather}
We fix the spatial step 
\[
\Delta x=\Delta y=h:=\pi\times 2^{-7}.
\] 
The terminal time is chosen to be $T=1$, and the reference solution is computed using the time step $\tau=2^{-14}$.
For the order of bias and expected single-run error, we choose five different time steps $\tau=2^{-m}$ with $m=4:8$.

We test the errors in $L^2$ and $W^{1,2}$ norms, and repeat the method for $N_E=10^4$ times. 
Let $u_{n,ij}^{\ell}$ be the numerical solution at location $(x_i, y_j)=(ih, jh)$ and time $t_n$ for the $\ell$-th experiment. Define the error function
\begin{gather}
\varepsilon_{n,ij}^{\ell}:=u_{n,ij}^{\ell}-u(x_i,y_j, t_n).
\end{gather}
 The expected $\ell^2$-error and the bias in $\ell^2$ are then approximated by
\begin{gather}
\begin{split}
&\mathcal{E}_2^h:=\max_{0\le n\le N}\frac{1}{N_E}\sum_{\ell=1}^{N_E} \sqrt{\sum_{i,j}|\varepsilon_{n,ij}^{\ell}|^2 \Delta x \Delta y},\\
&\mathcal{B}_2^h:=\max_{0\le n\le N}\sqrt{\sum_{i,j}\left|\frac{1}{N_E}\sum_{\ell=1}^{N_E}\varepsilon_{n,ij}^{\ell} \right|^2 \Delta x \Delta y}.
\end{split}
\end{gather}

For the $W^{1,2}$ norm, we use the Fourier spectral method to approximate the derivative, i.e., for function $u$ the derivative is given by
\[
Du:=w, \quad \hat{w}(k)=i\frac{2\pi}{L}k \hat{u}(k).
\]
Note that $Dw$ is a vector value. Then the error and bias in $W^{1,2}$ norm are approximated by
\begin{gather}
\begin{split}
& \mathcal{E}_{1,2}^h:=\max_{0\le n\le N}\frac{1}{N_E}\sum_{\ell=1}^{N_E} \sqrt{\sum_{i,j}(|\varepsilon_{n,ij}^{\ell}|^2+|D\varepsilon_{n,ij}^{\ell}|^2) \Delta x \Delta y},\\
& \mathcal{B}_{1,2}^h:=\max_{0\le n\le N}\sqrt{\sum_{i,j}
\left(\left|\frac{1}{N_E}\sum_{\ell=1}^{N_E}\varepsilon_{n,ij}^{\ell} \right|^2
+\left|\frac{1}{N_E}\sum_{\ell=1}^{N_E}D\varepsilon_{n,ij}^{\ell} \right|^2\right) \Delta x \Delta y}.
\end{split}
\end{gather}
We remark that the $L^2$ and $W^{1,2}$ norms can also be computed using the Fourier transform in the Fourier space by the Plancherel theorem (Parseval's identity).

\begin{figure}[!ht]
\centering
\includegraphics[width=0.45\textwidth]{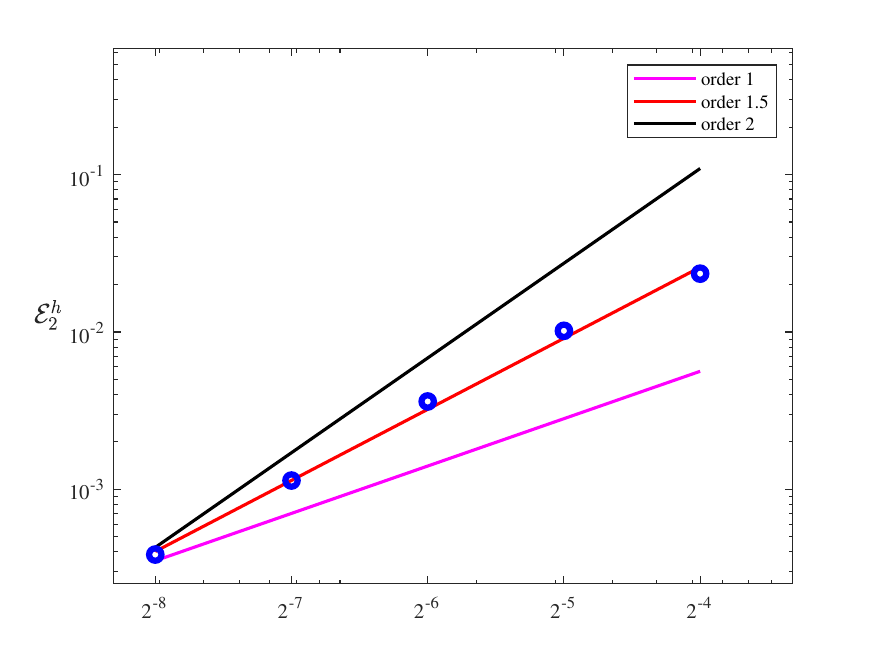}
\includegraphics[width=0.45\textwidth]{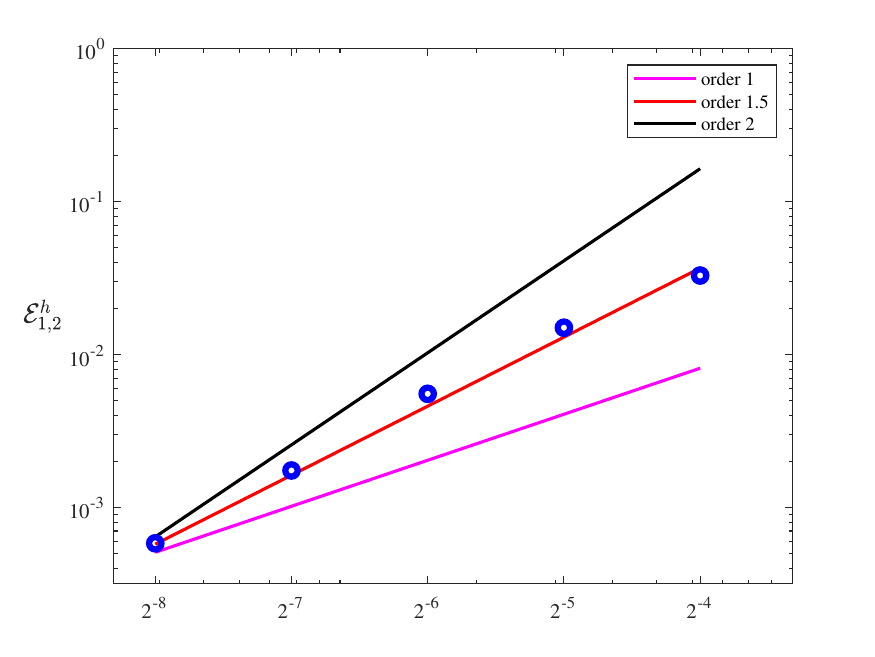}
\caption{The convergence order of expectation of error, for $\tau=2^{-4},2^{-5},\cdots,2^{-8}$,$N_E=10^4$. (a) $L^2$ norm; (b) $W^{1,2}$ norm.}
\label{fig:exoer}
\end{figure}

\begin{figure}[!ht]
\centering
\includegraphics[width=0.45\textwidth]{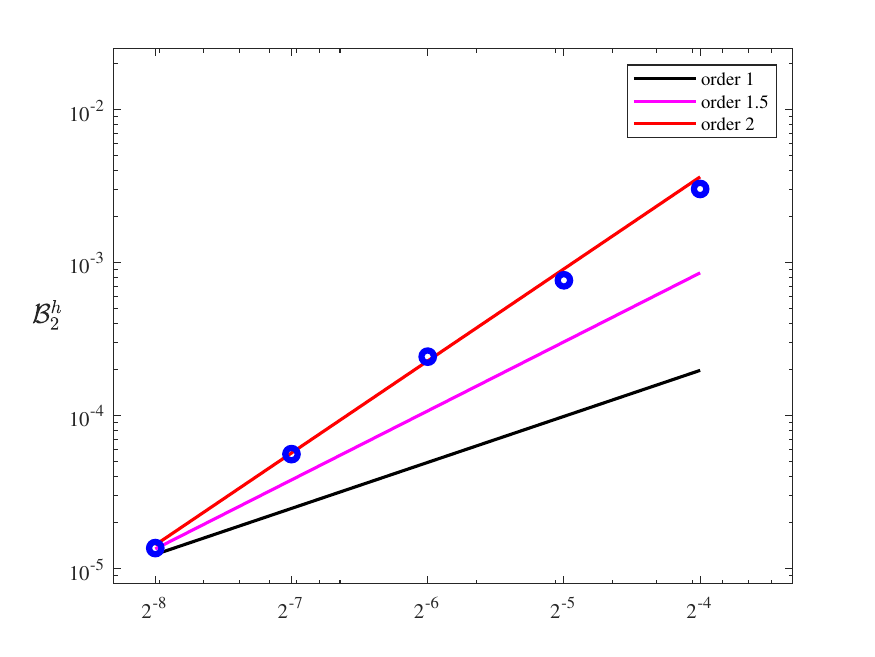}
\includegraphics[width=0.45\textwidth]{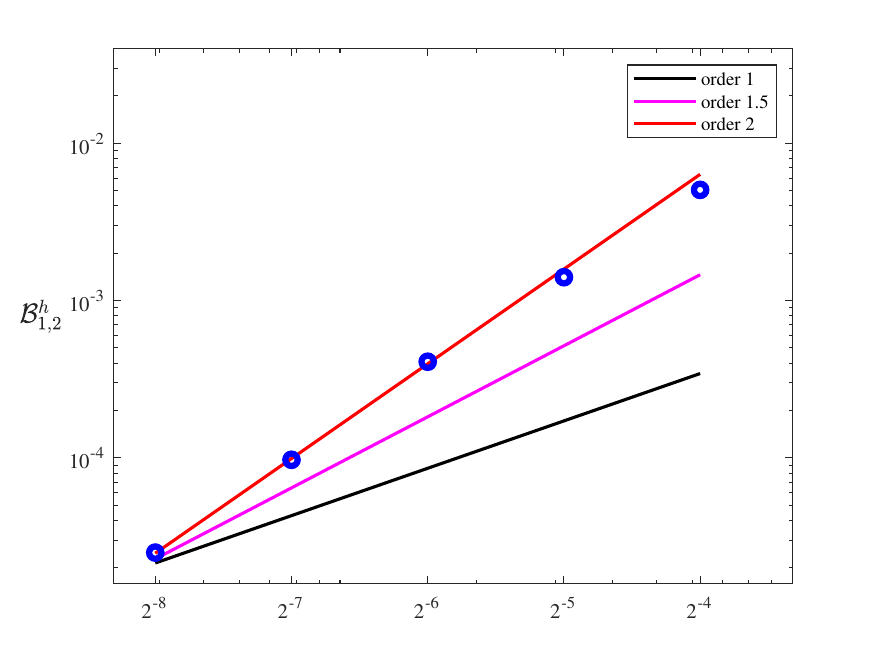}
\caption{The convergence order of bias, for $\tau=2^{-4},2^{-5},\cdots,2^{-8}$,$N_E=10^4$. (a) $L^2$ norm; (b) $W^{1,2}$ norm.}
\label{fig:eroex}
\end{figure}

Figure \ref{fig:exoer} shows the order of expected single-run error versus the time step. According to the numerical results, the error converges to order 1.5, agreeing with the theory. Note that for any fixed order of splitting, the convergence is only first order. This shows that the randomized method enjoys better convergence rate with the same cost.
Figure \ref{fig:eroex} shows the order of bias versus the time step is two, also consistent with the theory. 

\subsection*{Acknowledgement}
This work is partially supported by the National Key R\&D Program of China under grant No. 2020YFA0712000. The work of L. Li is also partially supported by NSFC 12371400 and 12031013, Shanghai Municipal Science and Technology Major Project 2021SHZDZX0102.

\bibliographystyle{plain}
\bibliography{references}

\appendix

\section{Various expressions for the expansions of semigroups}\label{sec:appendix}
Formulas in the proof of Lemma \ref{lem:localtruncation}.
\begin{multline}
S_{\mathcal{A}}(\tau)S_{\mathcal{R}}(\tau)S_{\mathcal{L}}(\tau)[a]=a+\tau (\cL a+\cA a+a-a^3)
+\frac{1}{2}\tau^2\Big[\cL^2a+\cA^2a\\
+(1-3a^2)(a-a^3)+2(1-3a^2)\cL a+2\cA(a-a^3)+2\cA\cL a\Big]+R_{\cA\cR\cL}.
\end{multline}

\begin{multline}
S_\cL(\tau)S_\cR(\tau)S_\cA(\tau)[a]=a+\tau(\cL a+\cA a+a-a^3)
+\frac{1}{2}\tau^2\Big[\cL^2a+\cA^2a\\
+(1-3a^2)(a-a^3)+2(1-3a^2)\cA a+2\cL(a-a^3)+2\cL\cA a\Big]+R_{\cL\cR\cA}
\end{multline}

\begin{multline}
S_\cA(\tau)S_\cL(\tau)S_\cR(\tau)[a]=a+\tau(\cL a+\cA a+a-a^3)
+\frac{1}{2}\tau^2\Big[\cL^2a+\cA^2a\\
+(1-3a^2)(a-a^3)+2\cL (a-a^3)+2\cA (a-a^3)+2\cA\cL a \Big]+R_{\cA\cL\cR}
\end{multline}

\begin{multline}
S_\cL(\tau)S_\cA(\tau)S_\cR(\tau)[a]=a+\tau(\cL a+\cA a+a-a^3)
+\frac{1}{2}\tau^2\Big[\cL^2a+\cA^2a\\
+(1-3a^2)(a-a^3)+2\cL (a-a^3)+2\cA (a-a^3)+2\cL\cA a\Big]+R_{\cL\cA\cR}
\end{multline}
\begin{multline}
S_\cR(\tau)S_\cA(\tau)S_\cL(\tau)[a]=a+\tau(\cL a+\cA a+a-a^3)+\frac{1}{2}\tau^2\Big[\cL^2a+\cA^2a\\
+(1-3a^2)(a-a^3)+2\cA\cL a+2(1-3a^2)\cL a+2(1-3a^2)\cA a\Big]+R_{\cR\cA\cL}
\end{multline}
\begin{multline}
S_\cR(\tau)S_\cL(\tau)S_\cA(\tau)[a]=a+\tau(\cL a+\cA a+a-a^3)
+\frac{1}{2}\tau^2[\cL^2a+\cA^2a\\
+(1-3a^2)(a-a^3)+2\cL\cA a+2(1-3a^2)\cL a+2(1-3a^2)\cA a]+R_{\cR\cL\cA}
\end{multline}
With the conditions given, it holds that
\begin{equation}
\max\{\Vert R_{\cA\cR\cL}\Vert_{k,p},\Vert R_{\cL\cR\cA}\Vert_{k,p},\Vert R_{\cA\cL\cR}\Vert_{k,p},\Vert R_{\cL\cA\cR}\Vert_{k,p},\Vert R_{\cR\cA\cL}\Vert_{k,p},\Vert R_{\cR\cL\cA}\Vert_{k,p}\}\leq  C\tau^3
\end{equation}
where the constant $C$ depends on the $W^{k+6,p}$ and $W^{k+5,\infty}$ norms of $a$.

\end{document}